\documentclass[11pt]{article}
\usepackage[a4paper,margin=1in]{geometry}
\usepackage{amsmath,amsthm,amssymb,mathtools}
\usepackage{enumitem}
\usepackage{microtype}
\usepackage[T1]{fontenc}
\usepackage{lmodern}
\usepackage[hidelinks]{hyperref}
\usepackage{tikz}
\usetikzlibrary{decorations.pathreplacing, calc}
\usepackage{subcaption}
\newtheorem{theorem}{Theorem}[section]
\newtheorem{lemma}[theorem]{Lemma}
\newtheorem{proposition}[theorem]{Proposition}

\newtheorem{definition}[theorem]{Definition}

\theoremstyle{remark}

\newcommand{\diam}{\operatorname{diam}}
\newcommand{\DS}{\operatorname{DS}}
\newcommand{\AS}{\mathrm{AS}}

\title{Maximizing the Steklov eigenvalues on trees with a diameter constraint}
\author{
    Jiangdong Ai\thanks{School of Mathematical Sciences and LPMC, Nankai University, Tianjin 300071, China. {\tt jd@nankai.edu.cn}. Partially supported by the Fundamental and Interdisciplinary Disciplines Breakthrough Plan of the Ministry of Education of China (JYB2025XDXM207).}
    \hspace{2mm}
    Huiqiu Lin\thanks{School of Mathematics, East China University of Science and Technology, 130 Meilong Road, Shanghai 200237, China. {\tt{huiqiulin@126.com}.}}
    \hspace{2mm}
    Yongtang Shi\thanks{Center for Combinatorics and LPMC, Nankai University, Tianjin 300071, China. {\tt{shi@nankai.edu.cn.}}
    Partially supported by the National Natural Science Foundation of China (No. 12431013).}
}
\date{}

\begin{document}
\maketitle

\begin{abstract}
We study the first nonzero Steklov eigenvalue $\lambda_2(T,\delta\Omega)$
of the Dirichlet-to-Neumann operator on a finite tree $T$ with leaf boundary $\delta\Omega$, under a constraint on the diameter $D$. He and Hua [Calc. Var. PDE, 2022] showed that $\lambda_2(T) \leq 2/D$ for any tree of diameter $D$, with the even-diameter equality case fully characterized. For odd $D$, the geometric picture underlying the sharp configurations has remained unclear beyond diameter three.
We determine this picture completely for all odd diameters $D = 2r+1 \geq 5$. The sharp value of $\lambda_2$ is achieved on spider trees with nearly-equidistributed branch lengths, forming the family of \emph{generalized almost seesaw trees} $\mathrm{AS}(r,q+2,c,t)$, prescribed by the arithmetic of $n$ relative to $\lceil r/2 \rceil$.
Together with the results of He-Hua and Lin-Zhao [Bull. Lond. Math. Soc., 2025] for even diameters and diameter three, this completes the geometric classification for every diameter.
The argument is based on a scalar root equation for one-center profiles, an inverse boundary quadratic form on boundary fluxes, and a reduction scheme from arbitrary trees to two-center profiles, and then to the
one-center class. The inverse variational viewpoint may be regarded as a boundary analogue of the classical distance-matrix formalism for trees initiated by Graham and Lov\'asz [Adv. Math., 1978].
\end{abstract}

\section{Introduction}
Among classical questions in spectral geometry lies the problem of minimizing or maximizing
eigenvalues under geometric constraints. In the continuous setting, Steklov eigenvalues under a
diameter constraint were studied, for instance, by Al Sayed, Bogosel, Henrot, and Nacry
\cite{AlSayedBogoselHenrotNacry2021} in the convex Euclidean framework. The present paper is
concerned with a discrete analogue of this theme for trees, viewed as one-dimensional singular
domains endowed with their leaf boundary.

Let $T=(V,E)$ be a finite tree and let $\delta\Omega\subset V$ be its leaf set. We regard
$\delta\Omega$ as the boundary of $T$ and denote by $V(T)\setminus\delta\Omega$ the interior.
For a function $f\colon V(T)\to\mathbb R$, the discrete Laplace operator is defined by
$$
\Delta f(v):=\sum_{w\sim v}\bigl(f(v)-f(w)\bigr),\qquad v\in V(T),
$$
and, for a boundary leaf $\omega\in\delta\Omega$ with unique neighbor $\omega'$, the outward
normal derivative is
$$
\partial_n f(\omega):=f(\omega)-f(\omega').
$$
A real number $\lambda\ge0$ is called a Steklov eigenvalue of $(T,\delta\Omega)$ if there exists a
nontrivial function $f\colon V(T)\to\mathbb R$ such that
$$
\Delta f(v)=0 \quad \text{for } v\in V(T)\setminus\delta\Omega,
\qquad
\partial_n f(\omega)=\lambda f(\omega) \quad \text{for } \omega\in\delta\Omega.
$$
Thus harmonicity is imposed at interior points, while the Steklov condition is prescribed on the
boundary leaves. The corresponding spectrum is a finite nondecreasing sequence
$$
0=\lambda_1(T,\delta\Omega)\le \lambda_2(T,\delta\Omega)\le \cdots \le
\lambda_{|\delta\Omega|}(T,\delta\Omega),
$$
 we abbreviate $\lambda_k(T,\delta\Omega)$ as $\lambda_k(T)$ or simply $\lambda_k$ when there is no
confusion. Given $g\in\mathbb R^{\delta\Omega}$, its harmonic extension $\widehat g$ is the
unique function on $V(T)$ with $\widehat g|_{\delta\Omega}=g$ and $\Delta\widehat g=0$ on the
interior. The associated Dirichlet-to-Neumann operator is
$$
\Lambda_T g := (\partial_n \widehat g)|_{\delta\Omega}.
$$
The first nonzero Steklov eigenvalue then admits the variational characterization
$$
\lambda_2(T,\delta\Omega)
=
\min_{\substack{g\in\mathbb R^{\delta\Omega}\setminus\{0\}\\
\sum_{\omega\in\delta\Omega} g(\omega)=0}}
\frac{\sum_{xy\in E(T)}\bigl(\widehat g(x)-\widehat g(y)\bigr)^2}{\sum_{\omega\in\delta\Omega} g(\omega)^2}.
$$
In particular, the problem is a diameter-constrained maximization problem for a boundary spectral
quantity on a discrete one-dimensional domain.

In this paper we study
$$
\max\{\lambda_2(T,\delta\Omega): |V(T)|=n,\ \diam(T)=D\}
$$
when the diameter $D$ is odd. The Steklov problem on manifolds goes back to Stekloff and is now one of
the central topics in spectral geometry; see the surveys of Girouard--Polterovich and
Colbois--Girouard--Gordon--Sher
\cite{Stekloff1902,GirouardPolterovich2017,ColboisGirouardGordonSher2024}.
Its discrete counterpart on graphs, via Dirichlet-to-Neumann operators, has developed rapidly in
recent years; representative references include
\cite{HuaHuangWang2017,HassannezhadMiclo2020,Perrin2019,Perrin2021,Tschanz2022,HanHua2023,ShiYuComp2022,ShiYuLich2022,YuYu2024,LinZhaoPlanar2025,LLYZ-2026}.
For trees, He and Hua established sharp diameter upper bounds for Steklov eigenvalues
\cite{HeHua2022}, Lin and Zhao determined the exact extremal picture for even diameter and for
diameter $3$ \cite{LinZhaoTrees2025}, and more recently Ai, Ji, Lian, and Yang~\cite{AiJiLianYang2026} proved a sharp
upper bound in terms of the maximum degree and the number of leaves for leaf-boundary trees. 
The remaining open case in the prescribed-diameter problem is therefore the odd-diameter regime $D=2r+1\ge5$. The purpose of the present paper is to resolve this case.

A complementary viewpoint is provided by the distance matrix of a tree. Classical works of
Graham--Pollak, Graham--Lov\'asz, and Merris show that several spectral questions on trees can be
reformulated in terms of quadratic forms built from the distance matrix~\cite{GrahamPollak1971,GrahamLovasz1978,Merris1990}. In our setting, the inverse Steklov
variational principle naturally leads to a quadratic form on boundary fluxes which admits the
representation
$$
Q_T(z)=-\frac12 z^T D_{\delta\Omega}z,
$$
where $D_{\delta\Omega}$ is the distance matrix of the leaf set. In this sense, the present problem
may be viewed as a boundary variant of the classical distance-matrix formalism for trees.

Write $D=2r+1$ and $M=n-2r-2$. The geometry of a spider is completely encoded by its branch-length
profile, so the one-center problem becomes a finite-dimensional length redistribution problem.
A generalized almost seesaw tree $AS(r,b,c,t)$ is the spider whose branch lengths are
$r+1$, $r$, exactly $t$ copies of $c+1$, and $b-2-t$ copies of $c$.
Its precise role in the extremal problem is described in Theorem~\ref{thm:global-odd} below.

Our main conclusions can be stated as follows.
\begin{theorem}
	\label{thm:all-diameters}
	Let $D\ge 2$ and let $n\ge D+1$. Consider the problem of maximizing $\lambda_2$
	among trees with order $n$ and diameter $D$.
	Then the extremizers are completely determined for every diameter.
	
	More precisely, the even-diameter case and the case $D=3$ were determined by
	Lin and Zhao~\cite{LinZhaoTrees2025}, while the remaining odd-diameter case $D\ge 5$
	is resolved in the present paper.
\end{theorem}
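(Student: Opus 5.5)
This statement is essentially a bookkeeping theorem: it assembles three regimes into one classification. The plan is to split by parity and size of $D$ and cite or prove each piece. For $D=2$ the only tree is the star $K_{1,n-1}$, so there is nothing to prove. For even $D=2r\ge 4$ and for $D=3$ we invoke the results of Lin and Zhao~\cite{LinZhaoTrees2025}, building on He and Hua's bound $\lambda_2\le 2/D$ and its even-diameter equality case~\cite{HeHua2022}. Those results already give the extremizers for every admissible $n$. The real content is therefore the odd case $D=2r+1\ge5$, which is the role of Theorem~\ref{thm:global-odd} below. The proof of the present statement reduces to that theorem, so the plan below really describes how I would prove Theorem~\ref{thm:global-odd}.

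For the odd case I would argue in three stages.

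\textbf{Stage 1 (inverse formulation).} Rewrite $1/\lambda_2$ variationally through boundary fluxes $z$ with $\sum z=0$, using the quadratic form $Q_T(z)=-\tfrac12 z^T D_{\delta\Omega}z$. Maximizing $\lambda_2$ then becomes minimizing the maximum of $Q_T(z)/\|z\|^2$ over this flux space. This formulation is convenient because local tree surgery changes leaf distances in a controlled, monotone way.

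\textbf{Stage 2 (reduction to spiders).} Fix a diametral path with its two central vertices $u,v$. Attach every vertex off the path to $u$ or $v$, producing a two-center profile: two spiders joined at their centers. Show via $Q_T$ that each such move does not decrease $\lambda_2$. Then show a two-center profile is dominated by a one-center spider obtained by contracting the central edge and lengthening one of the two longest legs, so that the diameter $2r+1$ is preserved with legs $r+1$ and $r$.

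\textbf{Stage 3 (optimizing the spider).} On spiders, derive the scalar root equation for $\lambda_2$ in terms of the branch lengths $\ell_1,\dots,\ell_b$. The equation should take the form $\sum_i \phi_\lambda(\ell_i)=0$ for an explicit rational or trigonometric-type function $\phi_\lambda$. A convexity or majorization argument in the $\ell_i$ then shows that the remaining $M=n-2r-2$ vertices should be spread as evenly as possible over $b-2$ short legs. Finally, optimize over the number of legs $b=q+2$ by comparing roots across consecutive values of $b$. This comparison is where the dependence on $n$ modulo $\lceil r/2\rceil$ enters, and it produces $\AS(r,q+2,c,t)$.

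I expect two steps to be the main obstacles. The first is the two-center-to-one-center reduction, because contracting the central edge can break the diameter constraint and may strictly lower $\lambda_2$ in unbalanced configurations; it will need a careful choice of test flux. The second is the discrete optimization over $b$ near the thresholds. Uniqueness of extremizers, needed for "completely determined", also requires tracking when each of these inequalities is strict.
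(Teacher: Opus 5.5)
As a proof of Theorem~\ref{thm:all-diameters} itself, your argument is the paper's. The statement is just the assembly of Lin--Zhao (even $D$, and $D=3$) with Theorem~\ref{thm:global-odd} for odd $D\ge5$, and nothing further is needed once that theorem is granted. Note also that ``completely determined'' does not require uniqueness: the paper allows a tie between the two candidates when $k=\kappa_{r,t}\in\mathbb Z$. However, your sketch of the odd case leaves open exactly the steps where the paper's work lies. As a self-contained proof of Theorem~\ref{thm:global-odd}, it has a gap in Stage~2.

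\textbf{Reduction to a two-center profile.} The natural way to show that a move $T\to T'$ does not decrease $\lambda_2$ is to test $Q_T$ on the maximizing flux of $T'$. A chain of local moves therefore needs sign information on the maximizing flux of every intermediate tree, and you do not have it. Read literally (off-path vertices made pendant at $u$ or $v$), the claim is false: for $r\ge3$, $S(r+1,r,2)$ has strictly larger $\lambda_2$ than $S(r+1,r,1,1)$. The paper compares only once. Each edge of $R_u$ (resp.\ $R_v$) is assigned to a leaf below it, with the geodesics to two deepest leaves kept intact. This yields a double spider $D$ with the same leaves, and the maximizing flux of $D$ is transferred to $T$. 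The key input is Lemma~\ref{lem:DS-root}: this flux is strictly positive on one side and strictly negative on the other. Hence every cut sum in $T$ dominates the flux of its assigned leaf, and $Q_T(\widetilde z)\ge Q_D(z)$.

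\textbf{Passage to one center.} Your target $S(r+1,r,a_2,\dots,a_p,b_2,\dots,b_q)$ is the right one and in fact never lowers $\lambda_2$, but not via an obvious test flux. Up to sign, the spider's optimal flux is negative only on the $(r+1)$-leg, with $|y|$ equal to the sum of all other entries. Transplant it to the double spider, sending the $(r+1)$-leg value to the $v$-side principal leaf. Writing $\sum z_b$ for the flux on the nonprincipal legs placed on the $v$-side, the central-edge term drops by $\bigl(\sum z_b\bigr)\bigl(2|y|-\sum z_b\bigr)>0$.

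The missing idea is the secular equation $1/A_\rho+1/B_\rho=1$ together with Lemma~\ref{lem:arm-transfer}. Move one nonprincipal branch from the side with the smaller sum to the side with the larger one. With $u=1/(\rho_D-b_k)$ and $Q=B-u$, this gives $\Psi_{D'}(\rho_D)=u\bigl(\frac{1}{Q(Q+u)}-\frac{1}{A(A+u)}\bigr)>0$, hence a strict increase of $\lambda_2$. So an extremal double spider, or the best placement of a fixed multiset of nonprincipal branches, has one side reduced to its principal branch.

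\textbf{Stage~3.} The threshold $s=\lceil r/2\rceil$ comes from $\partial_q\Phi_{r,M}$ having the sign of $1-(2c+1)\lambda$. This is negative for $c\ge s$ and positive for $c\le s-1$, which gives unimodality in $q$ with peak at $M/s$.
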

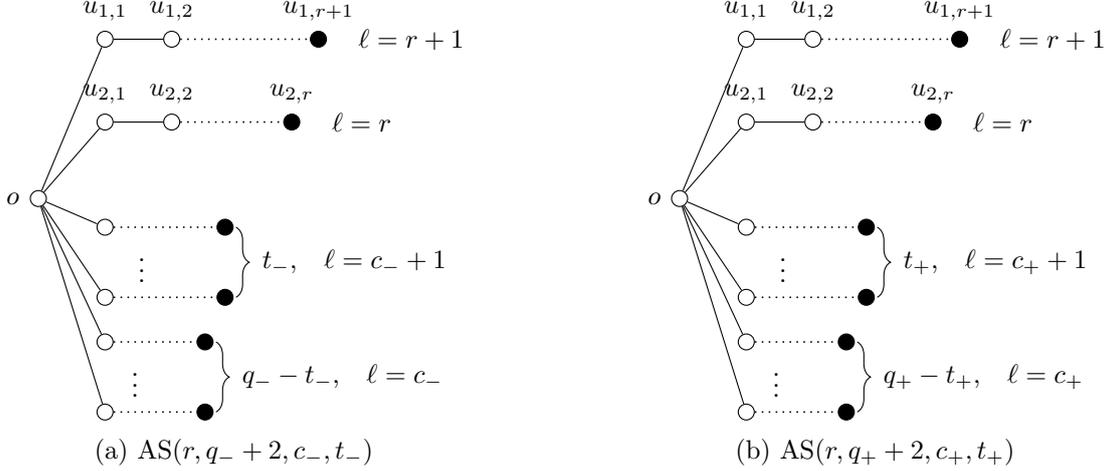
\begin{figure}[htbp]
\centering

\begin{subfigure}[b]{0.47\textwidth}
\centering
\begin{tikzpicture}[scale=0.65,
  v/.style  = {circle, draw, fill=white, inner sep=0pt, minimum size=6pt},
  lf/.style = {circle, draw, fill=black,  inner sep=0pt, minimum size=6pt},
  lbl/.style = {font=\small},
  dt/.style  = {dotted, semithick}
]
\def\H{1.35}\def\V{1.3}

\node[v, label={[lbl]left:$o$}] (o) at (0,0) {};

\node[v,  label={[lbl]above:$u_{1,1}$}]   (p11) at (  \H, 2.5*\V) {};
\node[v,  label={[lbl]above:$u_{1,2}$}]   (p12) at (2*\H, 2.5*\V) {};
\node[lf, label={[lbl]above:$u_{1,r+1}$}] (p1e) at (4.2*\H, 2.5*\V) {};
\draw(o)--(p11)--(p12);
\draw[dt](p12)--(p1e);
\node[lbl, anchor=west, xshift=8pt] at (p1e.east) {$\ell=r+1$};

\node[v,  label={[lbl]above:$u_{2,1}$}]  (p21) at (  \H, 1.2*\V) {};
\node[v,  label={[lbl]above:$u_{2,2}$}]  (p22) at (2*\H, 1.2*\V) {};
\node[lf, label={[lbl]above:$u_{2,r}$}]  (p2e) at (3.8*\H, 1.2*\V) {};
\draw(o)--(p21)--(p22);
\draw[dt](p22)--(p2e);
\node[lbl, anchor=west, xshift=8pt] at (p2e.east) {$\ell=r$};

\node[v]  (a1t) at (  \H, -0.45*\V) {};
\node[lf] (a1e) at (2.8*\H, -0.45*\V) {};
\draw(o)--(a1t);
\draw[dt](a1t)--(a1e);

\node[lbl] at (1.55*\H, -1.00*\V) {$\vdots$};

\node[v]  (a1b) at (  \H, -1.55*\V) {};
\node[lf] (a1f) at (2.8*\H, -1.55*\V) {};
\draw(o)--(a1b);
\draw[dt](a1b)--(a1f);

\draw[decorate, decoration={brace, amplitude=5pt}]
  ($(a1e)+(0.22,0)$) -- ($(a1f)+(0.22,0)$)
  node[midway, right=6pt, lbl] {$t_{-}$,\quad$\ell=c_{-}+1$};

\node[v]  (a2t) at (  \H, -2.25*\V) {};
\node[lf] (a2e) at (2.5*\H, -2.25*\V) {};
\draw(o)--(a2t);
\draw[dt](a2t)--(a2e);

\node[lbl] at (1.45*\H, -2.80*\V) {$\vdots$};

\node[v]  (a2b) at (  \H, -3.35*\V) {};
\node[lf] (a2f) at (2.5*\H, -3.35*\V) {};
\draw(o)--(a2b);
\draw[dt](a2b)--(a2f);

\draw[decorate, decoration={brace, amplitude=5pt}]
  ($(a2e)+(0.22,0)$) -- ($(a2f)+(0.22,0)$)
  node[midway, right=6pt, lbl] {$q_{-}-t_{-}$,\quad$\ell=c_{-}$};

\end{tikzpicture}
\caption{$\mathrm{AS}(r,q_{-}+2,c_{-},t_{-})$}
\end{subfigure}
\hfill
\begin{subfigure}[b]{0.47\textwidth}
\centering
\begin{tikzpicture}[scale=0.65,
  v/.style  = {circle, draw, fill=white, inner sep=0pt, minimum size=6pt},
  lf/.style = {circle, draw, fill=black,  inner sep=0pt, minimum size=6pt},
  lbl/.style = {font=\small},
  dt/.style  = {dotted, semithick}
]
\def\H{1.35}\def\V{1.3}

\node[v, label={[lbl]left:$o$}] (o) at (0,0) {};

\node[v,  label={[lbl]above:$u_{1,1}$}]   (q11) at (  \H, 2.5*\V) {};
\node[v,  label={[lbl]above:$u_{1,2}$}]   (q12) at (2*\H, 2.5*\V) {};
\node[lf, label={[lbl]above:$u_{1,r+1}$}] (q1e) at (4.2*\H, 2.5*\V) {};
\draw(o)--(q11)--(q12);
\draw[dt](q12)--(q1e);
\node[lbl, anchor=west, xshift=8pt] at (q1e.east) {$\ell=r+1$};

\node[v,  label={[lbl]above:$u_{2,1}$}]  (q21) at (  \H, 1.2*\V) {};
\node[v,  label={[lbl]above:$u_{2,2}$}]  (q22) at (2*\H, 1.2*\V) {};
\node[lf, label={[lbl]above:$u_{2,r}$}]  (q2e) at (3.8*\H, 1.2*\V) {};
\draw(o)--(q21)--(q22);
\draw[dt](q22)--(q2e);
\node[lbl, anchor=west, xshift=8pt] at (q2e.east) {$\ell=r$};

\node[v]  (b1t) at (  \H, -0.45*\V) {};
\node[lf] (b1e) at (2.8*\H, -0.45*\V) {};
\draw(o)--(b1t);
\draw[dt](b1t)--(b1e);

\node[lbl] at (1.55*\H, -1.00*\V) {$\vdots$};

\node[v]  (b1b) at (  \H, -1.55*\V) {};
\node[lf] (b1f) at (2.8*\H, -1.55*\V) {};
\draw(o)--(b1b);
\draw[dt](b1b)--(b1f);

\draw[decorate, decoration={brace, amplitude=5pt}]
  ($(b1e)+(0.22,0)$) -- ($(b1f)+(0.22,0)$)
  node[midway, right=6pt, lbl] {$t_{+}$,\quad$\ell=c_{+}+1$};

\node[v]  (b2t) at (  \H, -2.25*\V) {};
\node[lf] (b2e) at (2.5*\H, -2.25*\V) {};
\draw(o)--(b2t);
\draw[dt](b2t)--(b2e);

\node[lbl] at (1.45*\H, -2.80*\V) {$\vdots$};

\node[v]  (b2b) at (  \H, -3.35*\V) {};
\node[lf] (b2f) at (2.5*\H, -3.35*\V) {};
\draw(o)--(b2b);
\draw[dt](b2b)--(b2f);

\draw[decorate, decoration={brace, amplitude=5pt}]
  ($(b2e)+(0.22,0)$) -- ($(b2f)+(0.22,0)$)
  node[midway, right=6pt, lbl] {$q_{+}-t_{+}$,\quad$\ell=c_{+}$};

\end{tikzpicture}
\caption{$\mathrm{AS}(r,q_{+}+2,c_{+},t_{+})$}
\end{subfigure}

\caption{The two explicit extremal candidates in Theorem~1.2 and Theorem~4.1, where
$M=q_{-}c_{-}+t_{-}=q_{+}c_{+}+t_{+}$.}
\end{figure}
\begin{theorem}\label{thm:odd-main}
	Let $D=2r+1\ge 3$ be odd and let $n\ge D+1$. Let $
M=n-2r-2$, $s=\Bigl\lceil \frac r2\Bigr\rceil$, $q_- = \max\Bigl\{1,\Bigl\lfloor \frac Ms\Bigr\rfloor\Bigr\}$ and $q_+ = \Bigl\lceil \frac Ms\Bigr\rceil.$
If $M=0$, then the unique extremizer is the path $P_{D+1}$. Assume $M\ge 1$, let $M=q_-c_-+t_-$ with $ 0\le t_-<q_-$, and $M=q_+c_++t_+$ with $0\le t_+<q_+$.
Then every extremizer of $\lambda_2$ among trees with order $n$ and diameter $D$ is
isomorphic to one of the two generalized almost seesaw trees
$$
\AS(r,q_-+2,c_-,t_-),
\qquad
\AS(r,q_++2,c_+,t_+).
$$
In particular, if these two trees have distinct first nonzero Steklov eigenvalues, then the extremizer is unique.
For each fixed odd diameter, the extremizer is unique for all sufficiently large orders.
\end{theorem}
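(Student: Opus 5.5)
The proof has three stages: reduce an arbitrary extremal tree to a spider, compute $\lambda_2$ of a spider exactly, and optimize over branch-length profiles.

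\textbf{Setup and the inverse quadratic form.} Fix $D=2r+1\ge5$ and $M\ge1$; the case $M=0$ is immediate, since the only tree of order $D+1$ and diameter $D$ is the path. Write $\lambda_2$ through the inverse variational principle on boundary fluxes. If $\sigma=\Lambda_T g$ with $\sum_\omega\sigma(\omega)=0$, then
$$
\frac{1}{\lambda_2(T)}=\max_{\sigma\perp\mathbf 1,\ \sigma\ne0}\frac{Q_T(\sigma)}{\|\sigma\|^2},
\qquad
Q_T(\sigma)=-\tfrac12\,\sigma^T D_{\delta\Omega}\sigma ,
$$
where $D_{\delta\Omega}$ is the distance matrix of the leaves. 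Maximizing $\lambda_2$ therefore means minimizing the top eigenvalue of $-\frac12 D_{\delta\Omega}$ restricted to $\mathbf 1^\perp$. Fix a diametral path $u_0\cdots u_{2r+1}$ with central edge $u_ru_{r+1}$. Each remaining vertex hangs off some $u_i$, and its depth is bounded by the diameter constraint.

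\textbf{Reduction to spiders.} First I reduce to two-center profiles. Take any extremal $T$. Test the Rayleigh quotient with the antisymmetric flux $\sigma=e_{u_0}-e_{u_{2r+1}}$, or a suitably weighted version. Then show that transplanting any pendant subtree as a single pendant path attached at $u_r$ or $u_{r+1}$ does not increase $Q_T(\sigma)$ for the optimizing $\sigma$. This should hold because leaf-to-leaf distances can only shrink toward a "broom at the center" configuration while the diameter stays $D$. The result is a tree made of two spiders centered at $u_r$ and $u_{r+1}$. Next, moving all branches from $u_{r+1}$ to $u_r$ gives a one-center spider with a branch of length $r+1$ and one of length $r$. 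I compare the two via the scalar root equation: for a spider with branch lengths $\ell_1,\dots,\ell_k$, an eigenfunction is linear on each branch, and harmonicity at the center gives
$$
\sum_{j}\frac{1}{1/\lambda-\ell_j}\;\text{-type secular equation}\quad\Longleftrightarrow\quad \sum_j \frac{a_j}{1-\lambda\ell_j}=0 ,
$$
with $\lambda_2$ its smallest positive root. Equality in the reductions forces the tree to be a spider.

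\textbf{Optimizing the branch profile.} Among spiders whose branches are $r+1$, $r$, and $k$ further branches of total length $M$, with each extra branch of length at most $r$, I study $\lambda_2$ as a function of the profile. Write $F_\lambda(\ell)=\frac{1}{1-\lambda(\ell+\cdot)}$-type terms. A convexity argument in $\ell$ shows that replacing two short branches $\ell_i\ge\ell_j+2$ by $\ell_i-1,\ell_j+1$ raises the root. So the short branches are nearly equal: $t$ of length $c+1$ and $k-t$ of length $c$, which is exactly $\AS(r,k+2,c,t)$. It then remains to optimize the number of branches $k$. Heuristically, the optimal common length is around $s=\lceil r/2\rceil$. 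This comes from balancing the contribution of short branches against the two long branches $r$ and $r+1$, which control the antisymmetric mode. I will show that $\lambda_2(\AS(r,k+2,\cdot,\cdot))$ is unimodal in $k$ with its peak between $\lfloor M/s\rfloor$ and $\lceil M/s\rceil$. This leaves the two candidates $q_\pm$ (with $q_-\ge1$, since we need at least one branch). Uniqueness for large $n$ follows from an asymptotic expansion of the two roots in $M$: their difference has a leading term that is nonzero, with sign determined by $M \bmod s$, except on a finite set.

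\textbf{Main obstacle.} I expect the hardest step to be the unimodality in $k$ together with pinning the threshold exactly at $s=\lceil r/2\rceil$. The secular equation must be compared across profiles with different numbers of poles, and the parity of $r$ enters delicately. My first attempt would be to compare the defining functions at the root of one profile, evaluated for the other profile, and sign the difference by a direct estimate.
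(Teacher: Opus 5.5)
There is a genuine gap in two places: the reduction from an arbitrary tree to a two-center profile, and the unimodality in $q$.

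Your overall framework matches the paper: inverse flux form, reduction to two-center and then one-center profiles, lateral balancing, then optimizing the number $q$ of lateral branches. Your convexity argument for lateral balancing is correct and more direct than the paper's interpolation. Convexity of $\ell\mapsto 1/(1-\ell\lambda)$ for $\ell\lambda<1$ makes $F_\ell$ drop pointwise on $(1/(r+1),1/r)$ under $(\ell_i,\ell_j)\mapsto(\ell_i-1,\ell_j+1)$, so the root moves right.

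\textbf{The reduction to a double spider.} A fixed test flux such as $e_{u_0}-e_{u_{2r+1}}$ only reproduces $\lambda_2\le 2/(2r+1)$. To prove $\lambda_2(T)\le\lambda_2(D)$ you must take a \emph{maximizer of the target profile} $D$, transfer it to $T$, and show $Q_T(\widetilde z)\ge Q_D(z)$. That inequality fails for general fluxes: opposite-sign fluxes on the two leaves of a cherry cancel on the shared edge of $T$.

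What makes it true is a fact you never identify. From the Lagrange equations $(\rho-a_i)x_i=s-\beta/2$, $(\rho-b_j)y_j=-\beta/2$ with $\rho>r$, and the two-center secular equation $1/A_\rho+1/B_\rho=1$, the maximizer of a double spider is strictly positive on one side and strictly negative on the other (Lemma~\ref{lem:DS-root}).

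This is why the paper reaches a double spider in one step rather than by successive transplants, whose intermediate trees carry no sign information. Each edge of the side $R_u$ of the central edge $uv$ (resp.\ $R_v$) is assigned to a leaf below it, with the whole geodesic to a deepest leaf going to that leaf. This gives branch lengths $a_i,b_j\le r$, $a_1=b_1=r$, and the same leaf set. The sign pattern then makes every cut sum in $R_u$ dominate the flux of its assigned leaf, and strict positivity gives the rigidity.

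Three further points in this step do not hold as written:
\begin{itemize}
\item Your ``single pendant path'' transplant can create a path longer than $r$, and it changes the leaf set.
\item Your heuristic that leaf distances only shrink is false. Cross distances shrink, but same-side ones can grow: for $r=2$ and $R_u$ an edge $uw$ with two leaves at $w$, $d_T=2$ while $a_1+a_2=3$. Since $-\frac12\sigma^TD_{\delta\Omega}\sigma$ has coefficients of both signs, only the sign pattern makes both effects push $Q$ the same way.
\item ``Compare via the scalar root equation'' does not justify merging the two centers. By Lemma~\ref{lem:arm-transfer}, a nonprincipal branch must be moved from the side with the smaller sum $\sum_j 1/(\rho-b_j)$ to the side with the larger one; a move in the other direction can lower $\lambda_2$.
\end{itemize}

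\textbf{The unimodality in $q$.} This is what actually produces $q_\pm$, and you only announce it. The missing idea is to let $q$ range over the real interval $[M/r,M]$, with $c=\lfloor M/q\rfloor$, $M-cq$ lateral branches of length $c+1$ and $q(c+1)-M$ of length $c$. On each block
\[
\partial_q\Phi_{r,M}=\frac{c+1}{1-c\lambda}-\frac{c}{1-(c+1)\lambda}=\frac{1-(2c+1)\lambda}{(1-c\lambda)(1-(c+1)\lambda)}.
\]
Because the root lies in $(1/(r+1),1/r)$, this is negative for $c\ge s$ and positive for $c\le s-1$. With continuity at the breakpoints $q=M/(c+1)$, the root increases strictly on $[M/r,M/s]$ and decreases strictly on $[M/s,M]$. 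This is precisely where $s=\lceil r/2\rceil$ comes from.

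\textbf{Large-order uniqueness.} Your asymptotic argument is imprecise. For $M=ks+t$, the two root functions differ by $P_{s,t}(\lambda)/\bigl((1-(s-1)\lambda)(1-s\lambda)(1-(s+1)\lambda)\bigr)$, independent of $k$. The would-be leading term $P_{s,t}(1/(r+1))$ vanishes on the entire residue class $r=2s-1$, $s=2t+1$ (e.g.\ $r=5$, $t=1$). There you need the strict decrease of $P_{s,t}$ together with $\lambda_2>1/(r+1)$ instead. The paper avoids asymptotics altogether: $F_{B_{k,t}}(\zeta_{s,t})=(k-\kappa_{r,t})/(1-s\zeta_{s,t})$ is linear in $k$, so each residue class has at most one tie.
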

A more explicit order-by-order formulation is given later in Theorem~\ref{thm:all-orders}. The
structural core of the proof can be isolated in the following reduction principle.

\begin{theorem}\label{thm:reduction-principle}
Let $D=2r+1\ge 3$ be odd and let $n\ge D+1$. For every tree $T$ with $|V(T)|=n$ and
$\diam(T)=D$, there exists a double spider of the same order and diameter such that $\lambda_2(T)\le \lambda_2(D_T).$ If equality holds, then $T$ itself is a double spider. 

Moreover, every extremal double spider is a spider.
In particular, every extremizer of $\lambda_2$ among trees with order $n$ and diameter $D$
is a spider.
\end{theorem}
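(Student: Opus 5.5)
We argue through the inverse variational principle stated in the introduction. For a tree $T$ with leaf boundary, $1/\lambda_2(T)$ is the maximum of the flux quadratic form
$$
Q_T(z)=-\tfrac12\, z^T D_{\delta\Omega} z
$$
over boundary fluxes $z$ with $\sum z=0$ and $\|z\|_2=1$. Here $D_{\delta\Omega}$ is the leaf distance matrix. This holds because $\Lambda_T$ is invertible on mean-zero vectors. A harmonic function with leaf fluxes $z$ has edge currents determined by $z$: the current across an edge $e$ is the total flux $z(S_e)$ on one side of $e$. Its Dirichlet energy is therefore $\sum_e z(S_e)^2$, and expanding gives $-\tfrac12 z^T D_{\delta\Omega} z$. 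So maximizing $\lambda_2$ is equivalent to minimizing
$$
\max_{z} \sum_{e\in E(T)} z(S_e)^2 .
$$
Every comparison below is made by exhibiting, for a fixed optimal flux $z$ of the modified tree, a flux on $T$ whose energy is at least as large. This gives monotonicity of $1/\lambda_2$.

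\textbf{Step 1: reduction to double spiders.} Fix a diametral path $P=v_0v_1\cdots v_{2r+1}$ of $T$, with central edge $v_rv_{r+1}$. Every other vertex hangs off $P$ in some subtree rooted at some $v_i$. We build $D_T$ by moving all non-path material onto the two central vertices $v_r$ and $v_{r+1}$, as pendant paths. Then $D_T$ is a double spider with centers $v_r,v_{r+1}$.
\begin{itemize}
\item Each subtree hanging at $v_i$ with $i<r$ goes to $v_r$, and each subtree hanging at $v_i$ with $i>r+1$ goes to $v_{r+1}$.
\item Each such subtree is straightened into paths whose length preserves the diameter. This uses that a subtree at $v_i$ has depth at most $\min(i,2r+1-i)$.
\end{itemize}
To compare energies, take an optimal flux $z$ on $D_T$ and transplant it to $T$ by assigning to each leaf of $T$ the flux of the corresponding leaf of $D_T$. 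A branching subtree is split into its root-to-leaf paths, and a flux is assigned leafwise. We then check edge by edge that $\sum_e z(S_e)^2$ does not decrease. Two effects drive this:
\begin{itemize}
\item edges of $P$ between $v_i$ and the center now carry the extra current of the moved mass;
\item pendant paths lying farther from the center contribute more squared-current edges.
\end{itemize}
Equality forces every transplant to be trivial, so $T$ itself is a double spider.

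\textbf{Step 2: double spiders reduce to spiders.} Let the double spider have legs $a_1,\dots$ at $x=v_r$ and $b_1,\dots$ at $y=v_{r+1}$. Use the scalar root equation for one-center profiles, which the abstract says appears earlier. For the two-center profile it becomes a secular equation: $\lambda$ is a root of
$$
\sum_j \frac{1}{\ell_j+1/\lambda}
$$
on each side, coupled through the bridge edge. The comparison is to contract the bridge and move one leg onto the other side, producing the spider with branch lengths $r+1$, $r$ and the remaining legs. We show that the energy of the optimal flux does not decrease under the inverse operation. If all legs are on one side, we are already done. Otherwise the bridge edge carries current $z(\text{side }x)\ne 0$, which is lost in the spider. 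We must show that this loss is outweighed by rearranging the legs, or derive a strict inequality on the secular function.

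\textbf{Main obstacle.} Step 2 is where I expect the real difficulty. Moving a leg across the bridge changes its distance to the other side's leaves by $\pm1$, so the sign of the change is not automatic. I would first try a symmetrization argument. Write the optimal flux as $z=(z_x,z_y)$ and compare
$$
Q(z)\quad\text{with}\quad Q\bigl(z\text{ restricted and reassigned to one center}\bigr).
$$
The bridge contributes $(\sum z_x)^2$. I would bound it using that the diametral legs of lengths $r$ and $r+1$ must carry opposite-sign flux of dominant magnitude. Failing that, I would fall back on explicit root comparison via the secular equation, treating the side sums as parameters.
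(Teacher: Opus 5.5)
Your Step 1 matches the paper's argument up to the choice of edge-to-leaf assignment. Each side of the central edge is partitioned into leaf-terminated paths: each edge is assigned to one leaf below it, and half of the diametral path goes to its endpoint. ``Splitting into root-to-leaf paths'' must be read this way, or the order changes. One then transplants an optimal flux of $D_T$ leafwise and compares cut sums.

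The edge-by-edge check, however, rests on a fact you never state. An optimal flux on a double spider with $a_1=b_1=r$ can be taken strictly positive on every leaf of one side and strictly negative on every leaf of the other. Without this, the ``extra current of the moved mass'' can cancel rather than add, and $s_e(\widetilde z)^2\ge \widetilde z_{\pi(e)}^2$ fails. Strictness of the signs is also what drives your equality case. The paper proves this sign pattern with Lagrange multipliers:
\begin{itemize}
\item The test flux $x_1=1$, $y_1=-1$ gives $\rho_D\ge r+\tfrac12>r$. Hence an optimizer has bridge current $s\ne0$; take $s>0$.
\item The stationarity equations are $(\rho-a_i)x_i=s-\beta/2$ and $(\rho-b_j)y_j=-\beta/2$.
\item Summing over each side gives $x_i=(s/A_\rho)/(\rho-a_i)>0$ and $y_j=-(s/B_\rho)/(\rho-b_j)<0$, where $A_\rho=\sum_i(\rho-a_i)^{-1}$ and $B_\rho=\sum_j(\rho-b_j)^{-1}$.
\end{itemize}

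Step 2 is a genuine gap: you give no argument, only fallbacks. The secular equation you write is also incorrect. The terms should be $1/(\rho-\ell_j)$ with $\rho=1/\lambda$, not $1/(\ell_j+1/\lambda)$. The coupling through the bridge is the single equation $1/A_\rho+1/B_\rho=1$ on $\rho>r$, which comes out of the same summation.

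Your worry about the sign is justified. Transplanting the modified tree's optimal flux back onto $D$ strictly lowers the bridge term, so a test-flux comparison is the wrong tool here. The missing idea is to let the optimum of $D$ choose the direction of the move. Suppose $A\ge B$ at $\rho_D$, and move one non-principal leg $b_k$ ($k\ge2$) from the $v$-side to the $u$-side, producing $D'$. Put $w=1/(\rho_D-b_k)$ and $Q=B-w$. Then the secular function of $D'$ at the old root is
\[
\frac1{A+w}+\frac1Q-1=w\Bigl(\frac1{Q(Q+w)}-\frac1{A(A+w)}\Bigr)>0,
\]
because $A\ge Q+w>Q$. This function is strictly increasing with unique root $\rho_{D'}$, so $\rho_{D'}<\rho_D$, that is, $\lambda_2(D')>\lambda_2(D)$.

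You also do not need to compare $D$ directly with the fully collapsed spider $S(r+1,r,a_2,\dots,b_2,\dots)$. A single strictly improving move exists whenever both sides carry at least two legs. Hence an extremal double spider has one side consisting only of its principal leg, and is therefore a spider.

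Finally, a double spider with $a_1=b_1=r$ has no leg of length $r+1$. Your heuristic about ``diametral legs of lengths $r$ and $r+1$'' therefore does not apply in this setting.
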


The proof has three layers. In Section~\ref{sec:spider-family} we work in the one-center class and
encode a spider by its branch-length profile. The first nonzero Steklov eigenvalue is then
characterized as the unique root of a scalar rational function, which allows us to balance the lateral and principal branches and to reduce the one-center optimization to a unimodal scalar parameter.

The passage from arbitrary trees to one-center profiles is variational. In Section~\ref{sec:reduction} we introduce an inverse boundary quadratic form on mean-zero
boundary fluxes. A cut decomposition shows that this form can be written both as a sum of square cut fluxes and as $-\frac12 z^T D_{\delta\Omega} z$, thereby connecting the Steklov problem to the classical distance-matrix theory of trees. We then exploit the sign pattern of maximizing fluxes on two-center profiles to compare arbitrary trees with suitable double spiders and to transport nonprincipal branches from one side to the other. This yields the one-center rigidity embodied in Theorem~\ref{thm:reduction-principle}.

The paper is organized as follows. Section~\ref{sec:spider-family} is devoted to one-center profiles:
we introduce the generalized almost seesaw trees, derive the one-variable root equation, and prove
both the exact optimization theorem and the unimodality in the profile parameter. In
Section~\ref{sec:reduction} we reformulate the problem in terms of boundary fluxes and inverse quadratic
forms, and we compare arbitrary trees with suitable two-center profiles. In Section~\ref{sec:global-classification}
we combine these ingredients to obtain the global two-profile reduction, the single-threshold
comparison, and the final order-by-order formulation.

\section{Profile optimization in the one-center class}\label{sec:spider-family}

\subsection{One-center profiles and generalized almost seesaw trees}
A spider $S(\ell_1,\dots,\ell_b)$ is the tree obtained by gluing together one endpoint of each of $b$
pairwise internally disjoint paths of lengths $\ell_1,\dots,\ell_b$. The common endpoint is called
the center, and the numbers $\ell_1,\dots,\ell_b$ are the branch lengths.

\begin{definition}
Let $r\ge 1$, $b\ge 2$, $c\ge 1$ be integers, and let $0\le t\le b-2$. The generalized almost
seesaw tree $\AS(r,b,c,t)$ is the spider obtained by gluing together one endpoint of $b$ paths,
where one path has length $r$, one path has length $r+1$, exactly $t$ of the remaining $b-2$
paths have length $c+1$, and the other $b-2-t$ paths have length $c$.
\end{definition}

If $M\ge 1$, $\lceil M/r\rceil\le q\le M$, and
$$
M=qc+t,\qquad 0\le t<q,
$$
then the corresponding generalized almost seesaw tree is
$$
\AS(r,q+2,c,t)=S\bigl(r+1,r,\underbrace{c+1,\dots,c+1}_{t},\underbrace{c,\dots,c}_{q-t}\bigr).
$$

The geometry of a spider is completely encoded by its branch-length profile. This turns the diameter-constrained problem in the one-center class into a finite-dimensional redistribution problem. We begin with the exact eigenvalue equation for such a profile.

\begin{lemma}\label{lem:spider-equation}
Let $T=S(\ell_1,\dots,\ell_b)$
be a spider with $\ell_1>\ell_2\ge \ell_3\ge \cdots \ge \ell_b\ge 1.$
Define $$
F_{\ell}(\lambda):=\sum_{i=1}^b \frac{1}{1-\ell_i\lambda}.
$$
Then the first nonzero Steklov eigenvalue $\lambda_2(T)$ is the unique root of $F_{\ell}$
in the interval $\left(\frac1{\ell_1},\frac1{\ell_2}\right).$
\end{lemma}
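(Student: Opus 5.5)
Since harmonic functions on a path interior are affine, a Steklov eigenfunction on a spider is affine along each branch. This reduces the spectrum to a scalar secular equation, which we then analyze by interlacing.

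\textbf{Reduction to a scalar equation.} Let $f$ be a Steklov eigenfunction with eigenvalue $\lambda$, and write $x=f(o)$ for its value at the center $o$. On branch $i$, harmonicity at the interior vertices forces $f$ to be affine along the path, say with slope $a_i$ per edge. The leaf value is then $x+\ell_i a_i$, and the Steklov condition at the leaf reads $a_i=\lambda(x+\ell_i a_i)$, i.e.\ $a_i(1-\ell_i\lambda)=\lambda x$. Harmonicity at $o$ (note $b\ge 2$; when $b=2$ the spider is a path and $o$ is simply an interior vertex) gives $\sum_i a_i=0$.

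If $\lambda\ne 1/\ell_i$ for all $i$ and $\lambda\neq0$, then $a_i=\lambda x/(1-\ell_i\lambda)$ and $x\ne 0$, since otherwise $f\equiv0$. Hence $\lambda$ is a nonzero eigenvalue exactly when $F_\ell(\lambda)=0$, with a one-dimensional eigenspace.

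If $\lambda=1/\ell_j$, then $x=0$. The $a_i$ with $\ell_i\ne\ell_j$ vanish, and the $a_i$ with $\ell_i=\ell_j$ are arbitrary subject to summing to zero. This gives multiplicity $m_j-1$, where $m_j$ is the number of branches of length $\ell_j$.

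\textbf{Counting the spectrum.} Group the distinct lengths $L_1>L_2>\dots>L_k$, with multiplicities $m_1,\dots,m_k$, so that $L_1=\ell_1$ and $m_1=1$ by the strict inequality $\ell_1>\ell_2$. On $\mathbb R\setminus\{1/L_j\}$ we have
$$F_\ell'(\lambda)=\sum_i \ell_i/(1-\ell_i\lambda)^2>0,$$
so $F_\ell$ is strictly increasing on each interval between consecutive poles $1/L_1<1/L_2<\dots<1/L_k$. Near a pole, $F_\ell\to+\infty$ from the left and $-\infty$ from the right. Therefore $F_\ell$ has exactly one root in each bounded interval $(1/L_j,1/L_{j+1})$, which accounts for $k-1$ roots.

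On $(1/L_k,\infty)$, $F_\ell$ increases from $-\infty$ to $\lim_{\lambda\to\infty}F_\ell=0$ without attaining $0$, so it has no root there. On $(-\infty,1/L_1)$, $F_\ell(0)=b>0$ and $F_\ell$ increases from $0^+$, so the only root is the trivial value $\lambda=0$ (and $0$ is not a root of $F_\ell$). The nonzero eigenvalues produced so far are the $k-1$ roots plus $\sum_j(m_j-1)=b-k$ pole eigenvalues, for a total of $b-1$. Together with $\lambda_1=0$ this gives all $b=|\delta\Omega|$ eigenvalues, so the list is complete.

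\textbf{Identifying $\lambda_2$.} Every nonzero eigenvalue is either a root lying in $(1/L_1,\infty)$ or a pole value $1/L_j$ with $m_j\ge2$; since $m_1=1$, such a pole satisfies $1/L_j\ge 1/L_2=1/\ell_2$. The unique root in $(1/L_1,1/L_2)=(1/\ell_1,1/\ell_2)$ is strictly smaller than all of these, so it is $\lambda_2$.

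The main obstacle is the completeness count, in particular checking that no root hides in the outer intervals. It is handled by the sign and limit analysis above: $F_\ell\to0^-$ as $\lambda\to+\infty$ and $F_\ell(0)=b>0$.
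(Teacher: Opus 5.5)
Your proof is correct and follows essentially the same route as the paper. Both arguments use the affine reduction on each branch to the secular equation $F_\ell(\lambda)=0$, split on whether the center value vanishes (for you, equivalently, whether $\lambda$ is a pole $1/\ell_j$), use strict monotonicity of $F_\ell$ between poles, and observe that pole eigenvalues need $m_j\ge 2$ and so are at least $1/\ell_2$. Your multiplicity count recovering all $b$ eigenvalues is a nice extra that gives the full spectrum, but identifying $\lambda_2$ only needs two facts: every nonzero eigenvalue is either a root of $F_\ell$ or a repeated pole, and $F_\ell>0$ on $[0,1/\ell_1)$.
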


\begin{proof}
Let $o$ be the center of the spider.
For each branch, write $
o=v_{i,0}\sim v_{i,1}\sim \cdots \sim v_{i,\ell_i},$
where $v_{i,\ell_i}$ is the leaf at the end of the $i$-th branch.
Let $x_i$ be the boundary value at $v_{i,\ell_i}$, and let $m=f(o)$
be the value at the center.
Since the harmonic extension is affine on each path,
$f(v_{i,k})=m+\frac{k}{\ell_i}(x_i-m), 0\le k\le \ell_i.$
Hence the normal derivative at the leaf $v_{i,\ell_i}$ is
$$
\partial_n f(v_{i,\ell_i})
=
 f(v_{i,\ell_i})-f(v_{i,\ell_i-1})
=
\frac{x_i-m}{\ell_i}.
$$
Therefore the Steklov eigenvalue equation reads
\begin{equation}\label{eq:boundary-eq}
\frac{x_i-m}{\ell_i}=\lambda x_i,
\qquad i=1,\dots,b.
\end{equation}

Now the center is harmonic, so
$$
\sum_{i=1}^b \bigl(f(v_{i,1})-m\bigr)=0.
$$
Using the affine formula on each branch, this is equivalent to
$$
\sum_{i=1}^b \frac{x_i-m}{\ell_i}=0.
$$
By \eqref{eq:boundary-eq} we obtain
$$
\lambda\sum_{i=1}^b x_i=0.
$$
Thus, for any nonzero eigenvalue, $\sum_i x_i=0$.

Assume now that $m\neq 0$.
Then \eqref{eq:boundary-eq} gives
$x_i=\frac{m}{1-\ell_i\lambda}.$
Substituting this into $\sum_i x_i=0$ gives
$$
\sum_{i=1}^b \frac{1}{1-\ell_i\lambda}=0,
$$
namely
$F_{\ell}(\lambda)=0.$

Next,
$$
F_{\ell}'(\lambda)=\sum_{i=1}^b \frac{\ell_i}{(1-\ell_i\lambda)^2}>0,
$$
so $F_{\ell}$ is strictly increasing on every interval avoiding its poles.
Because $\ell_1>\ell_2$, we have
$$
\lim_{\lambda\to (1/\ell_1)^+} F_{\ell}(\lambda)=-\infty,
\qquad
\lim_{\lambda\to (1/\ell_2)^-} F_{\ell}(\lambda)=+\infty.
$$
Hence $F_{\ell}$ has a unique root in
$\bigl(\frac1{\ell_1},\frac1{\ell_2}\bigr)$.

It remains to show that this root is the \emph{first} positive Steklov eigenvalue.
If $m=0$, then \eqref{eq:boundary-eq} becomes
$\frac{x_i}{\ell_i}=\lambda x_i.$
Thus every nonzero component $x_i$ forces $\lambda=1/\ell_i$.
If $\lambda=1/\ell_1$, then because $\ell_1>\ell_2$ we must have $x_i=0$ for every $i\ge 2$.
But then $\sum_i x_i=0$ forces $x_1=0$, contradicting that the eigenvector is nonzero.
Therefore any eigenvalue arising from $m=0$ is of the form $1/\ell_i$ for some $i\ge 2$, and hence
is at least $1/\ell_2$.
So there is no positive Steklov eigenvalue below $1/\ell_2$ except the unique root of
$F_{\ell}$ found above.
This proves that this root is exactly $\lambda_2(T)$.
\end{proof}

\subsection{Exact optimization in the one-center class}
We now show that, for fixed principal branches, the remaining branches must be as equal as possible.

\begin{lemma}\label{lem:balance-side}
Fix $r\ge 1$ and let
$$
T_0=S(r+1,\,r,\,k,\,s,\,m_5,\dots,m_b)
$$
be a spider such that
$$
r\ge u\ge v+2\ge 3,
\qquad
r\ge m_j\ge 1 \quad (5\le j\le b).
$$
Let
$$
T_1=S(r+1,\,r,\,k-1,\,s+1,\,m_5,\dots,m_b).
$$
Then $\lambda_2(T_1)>\lambda_2(T_0)$.
\end{lemma}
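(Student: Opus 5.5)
Note first that the hypotheses are written with $u,v$ while the branches are $k,s$; I read them as $u=k$, $v=s$, i.e.\ $r\ge k\ge s+2\ge 3$. The plan is to apply Lemma~\ref{lem:spider-equation} to both spiders and compare the two rational functions directly on a common interval. In both $T_0$ and $T_1$ the longest branch is $r+1$, and it is the unique longest one. Every other branch has length at most $r$, with equality attained by the branch of length $r$. So $\ell_1=r+1$ and $\ell_2=r$ in both cases, and $\lambda_j:=\lambda_2(T_j)$ is the unique root of $F^{(j)}$ in $I=\bigl(\frac1{r+1},\frac1r\bigr)$.

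For $\lambda\in I$, every branch length $\ell\le r$ satisfies $1-\ell\lambda>0$. Hence the terms $\frac1{1-\ell\lambda}$ for the branches $k,s,k-1,s+1$ and the $m_j$ are finite and positive on $I$. The two functions differ only in these two terms:
$$
F^{(1)}(\lambda)-F^{(0)}(\lambda)=\bigl[g(k-1)+g(s+1)\bigr]-\bigl[g(k)+g(s)\bigr],\qquad g(x):=\frac1{1-x\lambda}.
$$
For fixed $\lambda\in I$, the function $g$ is strictly convex on $[0,1/\lambda)$, since $g''(x)=2\lambda^2/(1-x\lambda)^3>0$. The multiset $\{k-1,s+1\}$ has the same sum as $\{k,s\}$, and it is strictly majorized by it because $k-1\ge s+1$, which is exactly the assumption $k\ge s+2$. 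All four points lie in $[1,r]\subset[0,1/\lambda)$. Explicitly, $g(k)-g(k-1)$ and $g(s+1)-g(s)$ are the increments of $g$ over unit intervals $[k-1,k]$ and $[s,s+1]$ with $s+1\le k-1$. By strict convexity the first increment is strictly larger. So $F^{(1)}(\lambda)<F^{(0)}(\lambda)$ for every $\lambda\in I$.

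To conclude, evaluate at $\lambda_0$. We have $F^{(1)}(\lambda_0)<F^{(0)}(\lambda_0)=0$. Since $F^{(1)}$ is strictly increasing on $I$ (Lemma~\ref{lem:spider-equation}) and tends to $+\infty$ at $1/r$, its root $\lambda_1$ satisfies $\lambda_1>\lambda_0$, as claimed.

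The only delicate point is making sure that the pole structure is unchanged, namely that no branch reaches length $r+1$ and that $\ell_2=r$ persists. This is guaranteed because $k-1,s+1\le r$ and the branch of length $r$ is untouched. The strict convexity comparison is routine; note that $s\ge1$ is needed so that $T_1$ does not create a degenerate branch of length $0$.
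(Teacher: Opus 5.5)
Your proof is correct, and you are right to read the hypothesis $r\ge u\ge v+2\ge 3$ as $u=k$, $v=s$; the paper's own proof uses $k\ge s+2$. Your route differs from the paper's in its mechanics, though not in the underlying fact.

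\textbf{The paper's route.} It interpolates continuously: it replaces the branches $k,s$ by non-integer lengths $k-t$ and $s+t$ for $t\in[0,1]$. It then tracks the root $\lambda(t)$ of $F_t$ in $\bigl(\frac1{r+1},\frac1r\bigr)$ and uses the implicit function theorem to show $\lambda'(t)>0$. That sign condition reduces to $\bigl(1-(k-t)\lambda\bigr)^{-2}>\bigl(1-(s+t)\lambda\bigr)^{-2}$. This says $g'(x)=\lambda/(1-x\lambda)^2$ is increasing in $x$, which is the infinitesimal form of the convexity you use.

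\textbf{Your route.} You freeze $\lambda$ and integrate that convexity over the unit intervals $[s,s+1]$ and $[k-1,k]$, giving $F^{(1)}<F^{(0)}$ pointwise on the whole interval. You then conclude by evaluating one root function at the other's root. The paper uses this same device later, in Lemma~\ref{lem:arm-transfer} and Proposition~\ref{prop:single-threshold}.

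\textbf{Comparison.} Your version is more elementary. It needs no differentiability of $\lambda(t)$ and no fictitious non-integer branch lengths. It also avoids separately handling the endpoint $t=1$, where $k-t=s+t$ may occur. The paper's version gives monotonicity along the entire deformation, which is more than the lemma requires.

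\textbf{One small correction.} In your last remark, $s\ge 1$ is what makes $T_0$ a genuine spider, i.e.\ its branch of length $s$ has positive length. $T_1$ could never acquire a zero-length branch, since $k-1\ge s+1\ge 1$ in any case.
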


\begin{proof}
For $t\in[0,1]$, define
$$
F_t(\lambda):=
\frac1{1-(r+1)\lambda}
+
\frac1{1-r\lambda}
+
\frac1{1-(k-t)\lambda}
+
\frac1{1-(s+t)\lambda}
+
\sum_{j=5}^b \frac1{1-m_j\lambda}.
$$
Since $k-t\le r$ and $s+t\le k-t\le r$, all terms other than the first two are regular on
$\bigl(\frac1{r+1},\frac1r\bigr)$, and
$$
\partial_\lambda F_t(\lambda)
=
\frac{r+1}{(1-(r+1)\lambda)^2}
+
\frac{r}{(1-r\lambda)^2}
+
\frac{k-t}{(1-(k-t)\lambda)^2}
+
\frac{s+t}{(1-(s+t)\lambda)^2}
+
\sum_{j=5}^b \frac{m_j}{(1-m_j\lambda)^2}
>0.
$$
Moreover,
$$
\lim_{\lambda\to (1/(r+1))^+}F_t(\lambda)=-\infty,
\qquad
\lim_{\lambda\to (1/r)^-}F_t(\lambda)=+\infty.
$$
Hence $F_t$ has a unique root
$$
\lambda(t)\in\left(\frac1{r+1},\frac1r\right).
$$
For $t=0$ and $t=1$, Lemma~\ref{lem:spider-equation} gives
$\lambda(0)=\lambda_2(T_0), \qquad \lambda(1)=\lambda_2(T_1).$

By the implicit function theorem,
$\lambda'(t)=-\frac{\partial_t F_t(\lambda(t))}{\partial_\lambda F_t(\lambda(t))}.$
Now
$$
\partial_t F_t(\lambda)
=
-\frac{\lambda}{\bigl(1-(k-t)\lambda\bigr)^2}
+
\frac{\lambda}{\bigl(1-(s+t)\lambda\bigr)^2},
$$
so
$$
\lambda'(t)
=
\frac{\lambda(t)}{\partial_\lambda F_t(\lambda(t))}
\left(
\frac{1}{\bigl(1-(k-t)\lambda(t)\bigr)^2}
-
\frac{1}{\bigl(1-(s+t)\lambda(t)\bigr)^2}
\right).
$$
Because $k-t\ge s+t$ for $t\in[0,1]$, with equality only possibly at $t=1$ when $k=s+2$, and because
$\lambda(t)<1/r\le 1/(k-t)$, we have
$$
0<1-(k-t)\lambda(t)\le 1-(s+t)\lambda(t).
$$
For $t\in[0,1)$ the inequality is strict, hence
$$
\frac{1}{\bigl(1-(k-t)\lambda(t)\bigr)^2}
>
\frac{1}{\bigl(1-(s+t)\lambda(t)\bigr)^2}.
$$
Therefore $\lambda'(t)>0$ on $[0,1)$, and consequently
$\lambda_2(T_1)=\lambda(1)>\lambda(0)=\lambda_2(T_0).$
\end{proof}

We next show that, once at least one additional branch is present, the two longest branches must be as
balanced as the odd diameter permits.

\begin{lemma}\label{lem:balance-main}
Let
$T_0=S(a,\,b,\,m_3,\dots,m_s)$
be a spider such that
$a+b=2r+1, a\ge b+2, 1\le m_j\le b$ where $3\le j\le s$,
and assume $s\ge 3$.
Let
$T_1=S(a-1,\,b+1,\,m_3,\dots,m_s).$
Then
$\lambda_2(T_1)>\lambda_2(T_0).$
\end{lemma}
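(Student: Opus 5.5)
The plan is to follow the continuous interpolation used in the proof of Lemma~\ref{lem:balance-side}, this time moving length between the two principal branches. For $t\in[0,1]$ define
$$
F_t(\lambda):=\frac1{1-(a-t)\lambda}+\frac1{1-(b+t)\lambda}+\sum_{j=3}^s\frac1{1-m_j\lambda}.
$$
Since $a+b=2r+1$ is odd, $a-b$ is odd, so $a\ge b+2$ actually forces $a\ge b+3$. Hence $a-t>b+t$ for every $t\in[0,1]$, including $t=1$. Also $m_j\le b\le b+t$, so every lateral term is regular and positive on $\bigl(\frac1{a-t},\frac1{b+t}\bigr)$.

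As in Lemma~\ref{lem:spider-equation}, $\partial_\lambda F_t>0$ there. Moreover $F_t\to-\infty$ at the left endpoint and $F_t\to+\infty$ at the right endpoint; this still holds when some $m_j=b$ at $t=0$, since such terms only add to the divergence. So $F_t$ has a unique root $\lambda(t)$ in that interval. By Lemma~\ref{lem:spider-equation}, $\lambda(0)=\lambda_2(T_0)$ and $\lambda(1)=\lambda_2(T_1)$. The implicit function theorem then gives a $C^1$ curve with
$$
\lambda'(t)=-\frac{\partial_tF_t(\lambda(t))}{\partial_\lambda F_t(\lambda(t))},\qquad
\partial_tF_t(\lambda)=-\frac{\lambda}{(1-(a-t)\lambda)^2}+\frac{\lambda}{(1-(b+t)\lambda)^2}.
$$

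Write $A:=(a-t)\lambda(t)-1>0$ and $B:=1-(b+t)\lambda(t)>0$. Then $\lambda'(t)>0$ is equivalent to $A>B$, which is equivalent to $(a+b)\lambda(t)>2$. This is the wrong direction, so I would recheck the signs. We have $\partial_tF_t=\lambda(1/A^2-1/B^2)$ and $\lambda'=-\partial_tF_t/\partial_\lambda F_t$. So $\lambda'>0$ exactly when $1/A^2<1/B^2$, that is, when $A>B$. This would require $\lambda(t)>2/(2r+1)$, which contradicts the He--Hua bound in spirit. The monotonicity should therefore go the other way, and I must test the root equation to decide the sign. At the root, $F_t=0$ gives
$$
\frac1A=\frac1B+\sum_{j\ge3}\frac1{1-m_j\lambda}>\frac1B,
$$
using $s\ge3$ and the positivity of the lateral terms. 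Hence $A<B$, and so $\partial_tF_t=\lambda(1/A^2-1/B^2)>0$ and $\lambda'(t)<0$.

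This would make the eigenvalue decrease, contradicting the claim. So the crucial step, and the one I expect to be the real obstacle, is getting the sign bookkeeping right. I would recompute $\partial_t$ of $1/(1-(a-t)\lambda)$ carefully. Its derivative in $t$ is $-\lambda/(1-(a-t)\lambda)^2$, and $(1-(a-t)\lambda)^2=A^2$, so $\partial_tF_t=\lambda(-1/A^2+1/B^2)$. The inequality $A<B$ therefore gives $1/A^2>1/B^2$, hence $\partial_tF_t<0$ and $\lambda'(t)>0$.

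So the argument closes as follows. The root equation together with the presence of at least one lateral branch (this is where $s\ge3$ enters) gives $A<B$ strictly, equivalently $(2r+1)\lambda(t)<2$ along the whole path. This yields $\lambda'(t)>0$ on $[0,1]$, and integrating from $0$ to $1$ gives $\lambda_2(T_1)>\lambda_2(T_0)$.
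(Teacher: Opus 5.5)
Your final argument is correct and is essentially the paper's proof: the same interpolation $F_t$, the same implicit-function computation of $\lambda'(t)$, and the same key inequality $(2r+1)\lambda(t)<2$. The only difference is how you get that inequality: you read it off the root equation $1/A=1/B+\sum_{j\ge 3}(1-m_j\lambda)^{-1}>1/B$, whereas the paper checks $F_t\bigl(\tfrac{2}{2r+1}\bigr)>0$ and uses monotonicity in $\lambda$; the two are equivalent and yours is slightly more direct. In a final write-up, delete the abandoned intermediate computations. The claims $\lambda'>0\iff A>B$ and $\partial_tF_t=\lambda(1/A^2-1/B^2)$ are wrong, and only your recomputation $\partial_tF_t=\lambda(1/B^2-1/A^2)$ is correct.
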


\begin{proof}
For $t\in[0,1]$, define
$$
F_t(\lambda)=
\frac1{1-(a-t)\lambda}
+
\frac1{1-(b+t)\lambda}
+
\sum_{j=3}^s \frac1{1-m_j\lambda}.
$$
Since $a+b$ is odd and $a\ge b+2$, we in fact have $a\ge b+3$.
Hence $a-t>b+t\ge m_j$ for every $j$ and every $t\in[0,1]$, so the function $F_t$ is strictly increasing on
$$
\left(\frac1{a-t},\frac1{b+t}\right),
$$
with
$$
\lim_{\lambda\to (1/(a-t))^+}F_t(\lambda)=-\infty,
\qquad
\lim_{\lambda\to (1/(b+t))^-}F_t(\lambda)=+\infty.
$$
Hence $F_t$ has a unique root
$$
\lambda(t)\in\left(\frac1{a-t},\frac1{b+t}\right).
$$
For $t=0$ and $t=1$, Lemma~\ref{lem:spider-equation} yields
$\lambda(0)=\lambda_2(T_0)$ and $\lambda(1)=\lambda_2(T_1).$

We first prove that
\begin{equation}\label{eq:sigma-less-than-2D}
\lambda(t)<\frac{2}{2r+1}
\qquad\text{for all }t\in[0,1].
\end{equation}
Indeed, since $(a-t)+(b+t)=2r+1$, the first two terms cancel at $\lambda=\frac{2}{2r+1}$:
$\frac1{1-\frac{2(a-t)}{2r+1}} + \frac1{1-\frac{2(b+t)}{2r+1}} =0.$
Moreover, each additional branch satisfies $m_j\le b<\frac{2r+1}{2}$, so
$1-\frac{2m_j}{2r+1}>0.$
Therefore
$$
F_t\!\left(\frac{2}{2r+1}\right)>0.
$$
Since $F_t$ is strictly increasing and $F_t(\lambda(t))=0$, this proves \eqref{eq:sigma-less-than-2D}.

By the implicit function theorem,
$\lambda'(t)=-\frac{\partial_t F_t(\lambda(t))}{\partial_\lambda F_t(\lambda(t))}.$
A direct differentiation gives
$$
\lambda'(t)
=
\frac{\lambda(t)}{\partial_\lambda F_t(\lambda(t))}
\left(
\frac{1}{\bigl(1-(a-t)\lambda(t)\bigr)^2}
-
\frac{1}{\bigl(1-(b+t)\lambda(t)\bigr)^2}
\right).
$$
Because
$\frac1{a-t}<\lambda(t)<\frac1{b+t},$
we have
$1-(a-t)\lambda(t)<0<1-(b+t)\lambda(t).$
Thus it suffices to compare absolute values. Using \eqref{eq:sigma-less-than-2D},
$$
(a-t)\lambda(t)-1
<
1-(b+t)\lambda(t)
\iff
\bigl((a-t)+(b+t)\bigr)\lambda(t)<2
\iff
(2r+1)\lambda(t)<2,
$$
which is true. Therefore
$\bigl|1-(a-t)\lambda(t)\bigr|<1-(b+t)\lambda(t),$
and hence
$$
\frac{1}{\bigl(1-(a-t)\lambda(t)\bigr)^2}
>
\frac{1}{\bigl(1-(b+t)\lambda(t)\bigr)^2}.
$$
Consequently $\lambda'(t)>0$ on $[0,1]$, so
$\lambda_2(T_1)=\lambda(1)>\lambda(0)=\lambda_2(T_0).$
\end{proof}

We now combine the previous ingredients to get the following result.

\begin{theorem}\label{thm:main}
Let $D=2r+1\ge 3$ be odd and let $n\ge D+1$.
Let $M=n-2r-2.$
Among all spiders with diameter $D$ and order $n$, the following hold.
\begin{enumerate}[label=\textnormal{(\roman*)}]
    \item If $M=0$, then the unique spider is the path $P_{D+1}$.
    \item Assume $M\ge 1$. For each admissible integer $q$ with $\lceil M/r\rceil\le q\le M$, write
    $$
    M=qc+t,\qquad 0\le t<q.
    $$
    Then
    $$
    \max\{\lambda_2(T): T\text{ is a spider},\ \diam(T)=D,\ |V(T)|=n\}
    =
    \max_{\lceil M/r\rceil\le q\le M} \lambda_2\bigl(\AS(r,q+2,c,t)\bigr).
    $$
    For fixed $q$, the unique maximizing spider is $\AS(r,q+2,c,t)$, where $M=qc+t$ and $0\le t<q$.
\end{enumerate}
Its first nonzero Steklov eigenvalue is the unique root
$\lambda\in\bigl(\frac1{r+1},\frac1r\bigr)$ of
\begin{equation}\label{eq:balanced-root}
\frac{1}{1-(r+1)\lambda}+
\frac{1}{1-r\lambda}+
\frac{t}{1-(c+1)\lambda}+
\frac{q-t}{1-c\lambda}=0.
\end{equation}
\end{theorem}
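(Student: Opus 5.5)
The plan is to classify every spider of order $n$ and diameter $D=2r+1$, and then to apply Lemma~\ref{lem:balance-main} and Lemma~\ref{lem:balance-side} to push an arbitrary spider to the balanced profile.

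\emph{Setup.} Let $T=S(\ell_1,\dots,\ell_b)$ with $\ell_1\ge\ell_2\ge\cdots$. Then $\diam(T)=\ell_1+\ell_2=2r+1$ and $\sum_i \ell_i=n-1$. If $M=0$, then $b=2$ and $T$ is the path $P_{D+1}$, which gives (i). Assume now $M\ge1$. Then $b\ge3$, and the lateral branches $\ell_3,\dots,\ell_b$ have total length $\sum_{i\ge3}\ell_i=n-1-(2r+1)=M$.

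\emph{Balancing the principal branches.} Since $\ell_1+\ell_2$ is odd, we have $\ell_1>\ell_2$. Suppose $\ell_1\ge \ell_2+3$. The hypothesis $m_j\le b$ in Lemma~\ref{lem:balance-main} holds because $\ell_j\le \ell_2$ for $j\ge3$. Applying that lemma repeatedly strictly increases $\lambda_2$ and keeps the diameter and the order fixed. The process stops at $(\ell_1,\ell_2)=(r+1,r)$, and every lateral branch still satisfies $\ell_j\le r$. So any maximizer, even for fixed number of lateral branches $q=b-2$, must have principal branches $r+1,r$.

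\emph{Balancing the lateral branches.} Fix $q=b-2$. Then $q$ positive integers of sum $M$, each at most $r$, can exist only if $\lceil M/r\rceil\le q\le M$. Suppose two lateral lengths satisfy $k\ge s+2$; here I read the hypothesis ``$u\ge v+2$'' of Lemma~\ref{lem:balance-side} as $k\ge s+2$ with $k\le r$. Lemma~\ref{lem:balance-side} replaces $(k,s)$ by $(k-1,s+1)$ and strictly increases $\lambda_2$. Note that $\sum_j \ell_j^2$ strictly decreases at each step, so the process terminates. It ends exactly when all lateral lengths differ by at most $1$. With $M=qc+t$ and $0\le t<q$, this is the unique multiset consisting of $t$ copies of $c+1$ and $q-t$ copies of $c$, i.e.\ the tree $\AS(r,q+2,c,t)$.

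\emph{Conclusion.} Every spider with $q$ lateral branches therefore satisfies $\lambda_2(T)\le\lambda_2(\AS(r,q+2,c,t))$, with equality only when $T\cong \AS(r,q+2,c,t)$. Since the spider class is finite, taking the maximum over the admissible $q$ gives (ii). The root equation \eqref{eq:balanced-root} is Lemma~\ref{lem:spider-equation} applied to the profile of $\AS(r,q+2,c,t)$, with equal lengths grouped. That lemma applies because $\ell_1=r+1>r=\ell_2$, so the root lies in $\bigl(\tfrac1{r+1},\tfrac1r\bigr)$.

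\emph{Main obstacle.} The delicate point is bookkeeping in the iteration: checking that each step preserves the ordering hypotheses of the lemma used and cannot cycle. Both steps raise $\lambda_2$ strictly and the set of configurations is finite, so there is no cycling.
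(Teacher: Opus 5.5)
Your proposal is correct and follows the paper's proof essentially step for step. You balance the principal branches to $(r+1,r)$ via Lemma~\ref{lem:balance-main}, then equalize the lateral branches for fixed $q$ via Lemma~\ref{lem:balance-side}, and read off the root equation from Lemma~\ref{lem:spider-equation}; your parity remark ($\ell_1\ge\ell_2+3$) and the decreasing sum of squares as a termination measure are small but welcome explicit touches the paper leaves implicit.
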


\begin{proof}
If $M=0$, then $n=D+1$, and the only spider of diameter $D$ is the path $P_{D+1}$.
This proves part~\textnormal{(i)}.

Assume henceforth that $M\ge1$ and let
$$
T=S(\ell_1,\dots,\ell_b)
$$
be a spider with diameter $D=2r+1$ and order $n$. Reorder the branch lengths so that
$$
\ell_1>\ell_2\ge \ell_3\ge \cdots \ge \ell_b\ge 1.
$$
Since the diameter is $2r+1$, the two longest branches satisfy
$$
\ell_1+\ell_2=2r+1.
$$
Write
$$
m_1:=\ell_3,\dots,m_q:=\ell_b,
\qquad q:=b-2.
$$
Then
$$
q\ge1,
\qquad
1\le m_j\le \ell_2\le r,
\qquad
m_1+\cdots+m_q=n-2r-2=M.
$$
Thus the diameter constraint fixes the sum of the two principal branches, while the order
constraint fixes the total lateral length.

\smallskip
\noindent\emph{Step 1: balancing the principal profile.}
If $\ell_1\neq r+1$, then $\ell_1\ge \ell_2+2$, and Lemma~\ref{lem:balance-main} shows that
replacing $(\ell_1,\ell_2)$ by $(\ell_1-1,\ell_2+1)$ strictly increases $\lambda_2$. Repeating this
operation finitely many times, we arrive at a spider of the form
$$
S(r+1,r,m_1,\dots,m_q)
$$
with strictly larger first nonzero Steklov eigenvalue. Hence every maximizing spider must have
principal branch lengths $r+1$ and $r$.

\smallskip
\noindent\emph{Step 2: balancing the lateral profile.}
Fix $q$. Among all spiders of the form
$$
S(r+1,r,m_1,\dots,m_q)
$$
with $1\le m_j\le r$ and $m_1+\cdots+m_q=M$, suppose two lateral branches differ by at least
$2$, say $u\ge v+2$. Lemma~\ref{lem:balance-side} then replaces $(u,v)$ by $(k-1,v+1)$ and
strictly increases $\lambda_2$. Iterating this elementary redistribution, we reach a unique profile in
which every two lateral branches differ by at most $1$. Writing
$$
M=qc+t,
\qquad 0\le t<q,
$$
this balanced lateral profile is precisely
$$
(\underbrace{c+1,\dots,c+1}_{t},\underbrace{c,\dots,c}_{q-t}).
$$
Therefore the unique maximizer for fixed $q$ is
$$
\AS(r,q+2,c,t)=S\bigl(r+1,r,\underbrace{c+1,\dots,c+1}_{t},\underbrace{c,\dots,c}_{q-t}\bigr).
$$

\smallskip
\noindent\emph{Step 3: admissible range of the profile parameter.}
The integer $q$ is feasible exactly when each lateral branch has length between $1$ and $r$, namely
$$
\left\lceil \frac{M}{r}\right\rceil\le q\le M.
$$
Taking the maximum over all feasible $q$ proves part~\textnormal{(ii)}. The root equation
\eqref{eq:balanced-root} follows directly from Lemma~\ref{lem:spider-equation}.
\end{proof}

\subsection{Unimodality of the profile parameter}
Theorem~\ref{thm:main} reduces the one-center problem to a finite optimization over the number $q$ of lateral branches. The next result shows that the resulting scalar parameter enters through a genuinely unimodal profile.

Fix $r\ge 1$ and $M\ge 1$.
For every real number $
q\in \left[\frac{M}{r},\,M\right]$,
let
$c(q):=\left\lfloor \frac{M}{q}\right\rfloor.$
Define
\begin{equation}\label{eq:Phi-rMq}
\Phi_{r,M}(\lambda,q)=
\frac{1}{1-(r+1)\lambda}
+
\frac{1}{1-r\lambda}
+
\frac{M-c(q)q}{1-(c(q)+1)\lambda}
+
\frac{q(c(q)+1)-M}{1-c(q)\lambda}.
\end{equation}
Whenever $q$ is a feasible integer, this is exactly the balanced-spider equation
\eqref{eq:balanced-root} for $\AS(r,q+2,c,t)$.

\begin{theorem}\label{thm:q-unimodal}
Let $r\ge 1$, $M\ge 1$, and let
$s=\left\lceil \frac{r}{2}\right\rceil.$
For every $q\in \left[\frac{M}{r},\,M\right]$, the equation
$\Phi_{r,M}(\lambda,q)=0$
has a unique solution $\Sigma_{r,M}(q)\in \left(\frac{1}{r+1},\,\frac{1}{r}\right).$
The function
$q\longmapsto \Sigma_{r,M}(q)$
is continuous on $\bigl[\frac{M}{r},M\bigr]$, piecewise $C^1$, strictly increasing on
$\left[\frac{M}{r},\,\frac{M}{s}\right]$,
and strictly decreasing on
$\left[\frac{M}{s},\,M\right]$.
Consequently, if
$q_-:=\max\!\left\{1,\left\lfloor \frac{M}{s}\right\rfloor\right\}$ and
$q_+:=\left\lceil \frac{M}{s}\right\rceil$,
and if
$$
M=q_-c_-+t_-,\qquad 0\le t_-<q_-,
$$
$$
M=q_+c_++t_+,\qquad 0\le t_+<q_+,
$$
then
\begin{equation}\label{eq:q-two-candidates}
\max_{\lceil M/r\rceil\le q\le M} \lambda_2\bigl(\AS(r,q+2,c,t)\bigr)
=
\max\Bigl\{\lambda_2\bigl(\AS(r,q_-+2,c_-,t_-)\bigr),\,\lambda_2\bigl(\AS(r,q_++2,c_+,t_+)\bigr)\Bigr\},
\end{equation}
where on the left-hand side, for each admissible integer $q$, we write $M=qc+t$ with $0\le t<q$.
\end{theorem}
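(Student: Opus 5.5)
The plan is to treat $q$ as a continuous parameter and apply the implicit function theorem on each interval where $c(q)=\lfloor M/q\rfloor$ is constant. The key observation is that on such an interval $\Phi_{r,M}$ is \emph{affine} in $q$, so the sign of $\partial_q\Phi_{r,M}$ depends only on $c$ and $\lambda$, not on $q$.

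\emph{Existence and uniqueness of the root.} Fix a real $q\in[M/r,M]$ and write $c=c(q)$. Then $1\le c\le r$, and the two lateral coefficients $M-cq$ and $q(c+1)-M$ are nonnegative, since $c\le M/q<c+1$. If $c+1>r$, then $c=r$ forces $q=M/r$ and $M-cq=0$, so that term vanishes. Hence every lateral term with a nonzero coefficient has length at most $r$ and is regular on $\bigl(\frac1{r+1},\frac1r\bigr)$. There it contributes a nonnegative amount to $\partial_\lambda\Phi_{r,M}$. The first two terms give $\partial_\lambda\Phi_{r,M}>0$ together with the limits $-\infty$ at $\bigl(\frac1{r+1}\bigr)^+$ and $+\infty$ at $\bigl(\frac1r\bigr)^-$. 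This is exactly the argument of Lemma~\ref{lem:spider-equation}, and it gives a unique root $\Sigma_{r,M}(q)$.

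\emph{Continuity.} On each interval $q\in\bigl(\frac M{c+1},\frac Mc\bigr]$ the function $\Phi_{r,M}$ is smooth, indeed affine, in $q$. At the breakpoints $q=M/c$ I will check that the formulas with $c$ and with $c-1$ agree. Both reduce to $\frac{q}{1-c\lambda}$, because the coefficient $M-cq$ vanishes in one formula and $q(c-1+1)-M$ vanishes in the other. So $\Phi_{r,M}$ is jointly continuous in $(\lambda,q)$. Continuity of $\Sigma_{r,M}$ then follows from uniqueness of the root and strict monotonicity in $\lambda$, and piecewise $C^1$ follows from the implicit function theorem on each piece.

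\emph{Monotonicity.} On a piece with fixed $c$, a direct computation gives
\[
\partial_q\Phi_{r,M}=-\frac{c}{1-(c+1)\lambda}+\frac{c+1}{1-c\lambda}
=\frac{1-(2c+1)\lambda}{\bigl(1-(c+1)\lambda\bigr)\bigl(1-c\lambda\bigr)} .
\]
The denominator is positive for $\lambda<1/r$ when $c+1\le r$; the case $c=r$ occurs only at the endpoint $q=M/r$. Therefore $\Sigma_{r,M}'(q)=-\partial_q\Phi_{r,M}/\partial_\lambda\Phi_{r,M}$ has the sign of $(2c+1)\Sigma_{r,M}(q)-1$, and we use $\Sigma_{r,M}(q)\in\bigl(\frac1{r+1},\frac1r\bigr)$ in two cases.
\begin{itemize}
\item If $c\ge s=\lceil r/2\rceil$, then $2c+1\ge r+1$, so $(2c+1)\Sigma_{r,M}>1$ and $\Sigma_{r,M}$ is increasing.
\item If $c\le s-1$, then $2c+1\le r$, so $(2c+1)\Sigma_{r,M}<1$ and $\Sigma_{r,M}$ is decreasing.
\end{itemize}
Since $\lfloor M/q\rfloor\ge s$ if and only if $q\le M/s$, this gives strict increase on $[M/r,M/s]$ and strict decrease on $[M/s,M]$, using continuity across the breakpoints. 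Note that $1\le s\le r$, so $M/s\in[M/r,M]$.

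\emph{The discrete consequence.} For integer feasible $q$, the root $\Sigma_{r,M}(q)$ equals $\lambda_2(\AS(r,q+2,c,t))$ by Theorem~\ref{thm:main}. By unimodality, the maximum over the integers in $[\lceil M/r\rceil,M]$ is attained at the largest integer $\le M/s$ or the smallest integer $\ge M/s$, which gives \eqref{eq:q-two-candidates}.

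The step I expect to need the most care is this last feasibility bookkeeping rather than the analysis. First, when $M<s$ we have $\lfloor M/s\rfloor=0$, which is why $q_-$ is defined as $\max\{1,\lfloor M/s\rfloor\}$. Second, $q_-$ may fall below $\lceil M/r\rceil$ when no integer lies in $[M/r,M/s]$; in that case $q_+$ is the only relevant candidate, and the maximum in \eqref{eq:q-two-candidates} is to be read over the feasible candidates. I will verify these cases explicitly, together with the fact that $q_+\le M$.
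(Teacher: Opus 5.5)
Your argument is correct and is essentially the paper's proof: the unique root comes from strict monotonicity in $\lambda$, neighbouring piece formulas agree at the breakpoints, the sign of $\partial_q\Phi_{r,M}$ is governed by $1-(2c+1)\lambda$, and the split at $c=s$ uses $\Sigma_{r,M}(q)\in\bigl(\frac1{r+1},\frac1r\bigr)$. The only correction is that your feasibility worry is vacuous, so no reinterpretation of the two-candidate identity is needed: if $k=\lfloor M/s\rfloor\ge 1$, then $q_-=k$ is infeasible only when $kr<M<(k+1)s$, but $r\ge 2s-1$ gives $(k+1)s-kr\le 1-(k-1)(s-1)\le 1$, so no integer $M$ lies in that range; if $k=0$, then $M<s\le r$ and $\lceil M/r\rceil=1=q_-$, so both $q_-$ and $q_+$ are always feasible.
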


\begin{proof}
Fix $q\in [M/r,M]$ and write $c:=c(q)=\lfloor M/q\rfloor$.
If $q=M/r$, then necessarily $c=r$, and \eqref{eq:Phi-rMq} becomes
$\Phi_{r,M}(\lambda,M/r) = \frac{1}{1-(r+1)\lambda} + \frac{1+M/r}{1-r\lambda}.$
This function is strictly increasing on $(1/(r+1),1/r)$ and has limits $-\infty$ and $+\infty$
at the two endpoints, hence it has a unique root in that interval.

Assume now that $q>M/r$.
Then $c\le r-1$. The endpoint $q=M$ is covered by the same formula with $c(q)=1$.
On the open interval
$I_c:=\left(\frac{M}{c+1},\,\frac{M}{c}\right)$
the integer $c(q)$ is constant and equal to $c$, so \eqref{eq:Phi-rMq} becomes
$$
\Phi_{r,M}(\lambda,q)
=
\frac{1}{1-(r+1)\lambda}
+
\frac{1}{1-r\lambda}
+
\frac{M-cq}{1-(c+1)\lambda}
+
\frac{q(c+1)-M}{1-c\lambda}.
$$
Since $c\le r-1$, all side denominators are positive on $(1/(r+1),1/r)$.
Moreover,
$$
\partial_\lambda \Phi_{r,M}(\lambda,q)
=
\frac{r+1}{(1-(r+1)\lambda)^2}
+
\frac{r}{(1-r\lambda)^2}
+
\frac{(M-cq)(c+1)}{(1-(c+1)\lambda)^2}
+
\frac{(q(c+1)-M)c}{(1-c\lambda)^2}
>0.
$$
Thus $\Phi_{r,M}(\cdot,q)$ is strictly increasing on $(1/(r+1),1/r)$.
As $\lambda\downarrow 1/(r+1)$, the first term tends to $-\infty$; as
$\lambda\uparrow 1/r$, the second term tends to $+\infty$.
Therefore there is a unique root
$$
\Sigma_{r,M}(q)\in \left(\frac{1}{r+1},\,\frac{1}{r}\right)
$$
for every $q\in [M/r,M]$.

On each open interval $I_c$, the implicit function theorem gives a $C^1$ branch
$q\mapsto \Sigma_{r,M}(q)$.
Differentiating $\Phi_{r,M}(\Sigma_{r,M}(q),q)=0$ with respect to $q$ yields
$$
\Sigma_{r,M}'(q)
=
-
\frac{\partial_q \Phi_{r,M}(\Sigma_{r,M}(q),q)}
{\partial_\lambda \Phi_{r,M}(\Sigma_{r,M}(q),q)}.
$$
Now
$$
\partial_q \Phi_{r,M}(\lambda,q)
=
-\frac{c}{1-(c+1)\lambda}
+
\frac{c+1}{1-c\lambda}
=
\frac{1-(2c+1)\lambda}{(1-(c+1)\lambda)(1-c\lambda)}.
$$
Since $c\le r-1$, the denominator is positive on $(1/(r+1),1/r)$.
Hence the sign of $\partial_q\Phi_{r,M}$ is exactly the sign of $1-(2c+1)\lambda$.

Suppose first that $c\ge s=\lceil r/2\rceil$.
Then $
2c+1\ge 2\left\lceil \frac{r}{2}\right\rceil+1\ge r+1$,
so
$\frac{1}{2c+1}\le \frac{1}{r+1}<\Sigma_{r,M}(q).$
Therefore
$1-(2c+1)\Sigma_{r,M}(q)<0,$
hence $\partial_q\Phi_{r,M}<0$ and
$\Sigma_{r,M}'(q)>0.$
So $\Sigma_{r,M}$ is strictly increasing on every interval $I_c$ with $c\ge s$.

Suppose next that $c\le s-1$.
Then
$2c+1\le 2s-1\le r,$
and thus
$\Sigma_{r,M}(q)<\frac{1}{r}\le \frac{1}{2c+1}.$
Therefore
$1-(2c+1)\Sigma_{r,M}(q)>0,$
hence $\partial_q\Phi_{r,M}>0$ and
$\Sigma_{r,M}'(q)<0.$
So $\Sigma_{r,M}$ is strictly decreasing on every interval $I_c$ with $c\le s-1$.

It remains to prove continuity across the breakpoints
$q=\frac{M}{c+1}, (1\le c\le r-1).$
At such a point, the formulas from the two neighboring blocks coincide:
$$
\begin{aligned}
&\frac{1}{1-(r+1)\lambda}
+\frac{1}{1-r\lambda}
+\frac{M-c\frac{M}{c+1}}{1-(c+1)\lambda}
+\frac{\frac{M}{c+1}(c+1)-M}{1-c\lambda}\\
&\qquad=
\frac{1}{1-(r+1)\lambda}
+\frac{1}{1-r\lambda}
+\frac{\frac{M}{c+1}}{1-(c+1)\lambda},
\end{aligned}
$$
while
$$
\begin{aligned}
&\frac{1}{1-(r+1)\lambda}
+\frac{1}{1-r\lambda}
+\frac{M-(c+1)\frac{M}{c+1}}{1-(c+2)\lambda}
+\frac{\frac{M}{c+1}(c+2)-M}{1-(c+1)\lambda}\\
&\qquad=
\frac{1}{1-(r+1)\lambda}
+\frac{1}{1-r\lambda}
+\frac{\frac{M}{c+1}}{1-(c+1)\lambda}.
\end{aligned}
$$
Hence the two neighboring branches meet. Together with the endpoint case $q=M/r$ treated above, this shows that $\Sigma_{r,M}$ is continuous on
$[M/r,M]$.
Combining continuity with the monotonicity on each block shows that $\Sigma_{r,M}$ is
strictly increasing on $[M/r,M/s]$ and strictly decreasing on $[M/s,M]$.

Finally, for every feasible integer $q$, writing $M=qc+t$ with $0\le t<q$, we have
$\Sigma_{r,M}(q)=\lambda_2\bigl(\AS(r,q+2,c,t)\bigr),$
because \eqref{eq:Phi-rMq} is exactly the root equation \eqref{eq:balanced-root} for the generalized almost seesaw tree $\AS(r,q+2,c,t)$.
Therefore the strict unimodality of $\Sigma_{r,M}$ implies that the maximum over all feasible
integers is attained at one of the two integers nearest to $M/s$, namely $q_-$ or $q_+$.
This proves \eqref{eq:q-two-candidates}.
\end{proof}

\section{Boundary fluxes and two-center reduction}\label{sec:reduction}

\subsection{The inverse boundary quadratic form}
In this section we recast the problem in terms of boundary fluxes and the inverse Dirichlet-to-Neumann form. This is the discrete analogue of working with the inverse Steklov operator on the boundary, and it is particularly well adapted to diameter-constrained comparisons. The cut decomposition below may also be viewed as a boundary counterpart of the classical distance-matrix identities for trees; compare \cite{GrahamPollak1971,GrahamLovasz1978,Merris1990}.

Let $T=(V,E)$ be a tree and let $\delta\Omega\subset V$ be its leaf set.
Given a boundary flux vector $z\in \mathbb{R}^{\delta\Omega}$ with $\sum_{\omega\in \delta\Omega} z_\omega =0$, there exists a unique function $u_z\colon V\to \mathbb{R}$, unique up to an additive constant,
such that $u_z$ is harmonic on the interior vertices and its outward normal derivative on
$\delta\Omega$ is exactly $z$.
Define
$$
Q_T(z):=\sum_{xy\in E}\bigl(u_z(x)-u_z(y)\bigr)^2.
$$

\begin{proposition}\label{prop:inverse-rayleigh}
For every tree $T$ with leaf boundary $\delta\Omega$,
$$
\frac1{\lambda_2(T,\delta\Omega)}
=
\max_{\substack{z\in \mathbb{R}^{\delta\Omega}\setminus\{0\}\\ \sum_{\omega\in \delta\Omega} z_\omega=0}}
\frac{Q_T(z)}{\sum_{\omega\in \delta\Omega} z_\omega^2}.
$$
In other words, the nonzero Steklov eigenvalues are the reciprocals of the eigenvalues of the
quadratic form $Q_T$ on the codimension-one space
$\{z\in\mathbb{R}^{\delta\Omega}:\sum_{\omega\in\delta\Omega} z_\omega=0\}$.
\end{proposition}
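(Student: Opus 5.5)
The plan is to identify $Q_T$ as the quadratic form of the pseudo-inverse of the Dirichlet-to-Neumann operator on mean-zero boundary data. The variational characterization of $\lambda_2$ stated in the introduction then becomes a statement about the inverse operator.

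First, I will recall the standard facts about $\Lambda_T$. It is symmetric and positive semidefinite on $\mathbb{R}^{\delta\Omega}$, since Green's identity on the tree gives
\[
\sum_{\omega}\Lambda_T g(\omega)\,h(\omega)=\sum_{xy\in E}\bigl(\widehat g(x)-\widehat g(y)\bigr)\bigl(\widehat h(x)-\widehat h(y)\bigr).
\]
This uses that $\Delta\widehat g$ vanishes at interior vertices and equals $\partial_n\widehat g$ at leaves, because each leaf has exactly one neighbor. Its kernel consists of the constants: if the energy vanishes, $\widehat g$ is constant because $T$ is connected. Its range is the mean-zero subspace $W=\{z:\sum_\omega z_\omega=0\}$, because summing $\Delta\widehat g$ over all vertices gives zero, so $\sum_\omega\Lambda_T g(\omega)=0$. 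Hence $\Lambda_T$ restricts to a positive definite, self-adjoint bijection $W\to W$. Its eigenvalues on $W$ are exactly the nonzero Steklov eigenvalues $\lambda_2\le\dots\le\lambda_{|\delta\Omega|}$, and an orthonormal eigenbasis of $W$ can be chosen.

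Second, I will check that $u_z$ is well defined and relate it to this inverse. Given $z\in W$, set $g=\Lambda_T^{-1}z\in W$ and $u_z=\widehat g$. Then $u_z$ is harmonic on the interior with normal derivative $z$. For uniqueness up to constants, take two such functions. Their difference $w$ is harmonic on the interior with zero normal derivative, so it is harmonic at every vertex. Then $\sum_{xy\in E}(w(x)-w(y))^2=\sum_v w(v)\Delta w(v)=0$, so $w$ is constant. Taking $h=g$ in the Green identity gives
\[
Q_T(z)=\sum_\omega z_\omega g(\omega)=\langle z,\Lambda_T^{-1}z\rangle.
\]
This value does not depend on the additive constant, since $z$ has mean zero.

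Third, $\Lambda_T^{-1}$ is self-adjoint and positive on $W$ with eigenvalues $1/\lambda_k$ for $k\ge2$. The Courant--Fischer (Rayleigh) principle on $W$ therefore gives that the maximum of $Q_T(z)/|z|^2$ over $W\setminus\{0\}$ equals $1/\lambda_2$. The same principle shows that the full spectrum of $Q_T$ on $W$ is $\{1/\lambda_k\}_{k\ge2}$, which is the second assertion.

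I expect no serious obstacle. The only point needing care is that the range of $\Lambda_T$ is all of $W$, which is what makes the inverse and $u_z$ well defined. This follows from symmetry: the range is the orthogonal complement of the kernel, and the kernel is the constants, so the range is $W$.
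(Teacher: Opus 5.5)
Your proposal is correct and follows the same route as the paper. Green's identity identifies $Q_T(z)$ with $\langle z,\Lambda_T^{-1}z\rangle$ on the mean-zero subspace, and the Rayleigh principle for this inverse operator then gives $1/\lambda_2$ and the reciprocal correspondence of spectra; you also justify points the paper only asserts (kernel of $\Lambda_T$ is the constants, its range is exactly the mean-zero subspace, and $u_z$ is unique up to constants), which makes the invertibility step fully rigorous.
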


\begin{proof}
Let $\Lambda_T$ be the Dirichlet-to-Neumann operator.
Restricted to the subspace of mean-zero boundary data, $\Lambda_T$ is positive definite and
invertible.
Its inverse sends a boundary flux vector $z$ to the corresponding boundary potential
$u_z|_{\delta\Omega}$, modulo constants.
By discrete Green's identity,
$$
Q_T(z)=\sum_{xy\in E}\bigl(u_z(x)-u_z(y)\bigr)^2
=\sum_{\omega\in \delta\Omega} u_z(\omega) z_\omega.
$$
Hence $Q_T$ is exactly the quadratic form of $\Lambda_T^{-1}$ on the mean-zero subspace.
Therefore the eigenvalues of $Q_T$ are $1/\lambda_k(T,\delta\Omega)$ for $k\ge 2$, and the largest one is
$1/\lambda_2(T,\delta\Omega)$.
This is precisely the stated variational characterization.
\end{proof}

We next show the cut-sum description of $Q_T$.
Fix a root $o$ of $T$ and orient every edge away from $o$.
For an oriented edge $e$, let $U_e$ denote the set of boundary leaves in the descendant
component of $e$, and let $s_e(z)=\sum_{\omega\in U_e} z_\omega.$

\begin{proposition}\label{prop:cutsum}
For every mean-zero boundary flux vector $z$, $
Q_T(z)=\sum_{e\in E} s_e(z)^2.$
In particular, if $D_{\delta\Omega}$ denotes the distance matrix of the leaves, then
$Q_T(z)=-\frac12 z^T D_{\delta\Omega} z.$
\end{proposition}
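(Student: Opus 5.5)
The plan is to compute the edge differences of $u_z$ directly, sum their squares, and then convert the cut sum into a distance quadratic form.

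\emph{Step 1: edge differences equal cut fluxes.} Fix an oriented edge $e=(x,y)$, with $y$ the child of $x$, and let $T_e$ be the descendant component containing $y$. Summing the Laplacian of $u_z$ over all vertices of $T_e$ uses harmonicity at the interior vertices and $\Delta u_z(\omega)=\partial_n u_z(\omega)=z_\omega$ at the leaves of $T_e$. In this sum every edge internal to $T_e$ appears twice with opposite signs and cancels. Only the edge $e$ survives, contributing $u_z(y)-u_z(x)$, so
$$u_z(y)-u_z(x)=\sum_{\omega\in U_e} z_\omega=s_e(z).$$
One special case needs care: if the root $o$ is itself a leaf, it never lies in a descendant set, and the argument above is unaffected. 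Squaring and summing over $E$ gives $Q_T(z)=\sum_e s_e(z)^2$.

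\emph{Step 2: express each cut term through the bipartition.} Removing $e$ splits the leaves into $U_e$ and its complement $U_e^c$. By the mean-zero condition,
$$s_e(z)=\sum_{U_e} z_\omega=-\sum_{U_e^c} z_\omega,$$
and therefore
$$s_e(z)^2=-\sum_{\omega\in U_e,\;\omega'\in U_e^c} z_\omega z_{\omega'}=-\tfrac12\sum_{\omega,\omega'}\mathbf 1[e \text{ separates }\omega,\omega']\,z_\omega z_{\omega'}.$$
The factor $\tfrac12$ accounts for the symmetric double counting over ordered pairs.

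\emph{Step 3: sum over edges.} Summing over $e$ and using that $d(\omega,\omega')$ equals the number of edges separating $\omega$ from $\omega'$ (the edges on the unique path between them) gives
$$Q_T(z)=-\tfrac12\, z^T D_{\delta\Omega} z.$$

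The only delicate point is the sign and orientation bookkeeping in Step 1. The outward normal derivative is $f(\omega)-f(\omega')$, so it matches $\Delta f(\omega)$ at a leaf. Consequently the flux telescopes to the child-minus-parent difference, and squaring makes the overall sign irrelevant.
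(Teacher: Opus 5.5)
Your proof is correct and follows the paper's argument. You sum $\Delta u_z$ over the descendant component so that only the cut edge survives, giving $u_z(y)-u_z(x)=s_e(z)$; then the mean-zero condition turns each $s_e(z)^2$ into minus the cross sum over the cut, and summing over edges with $d(\omega,\omega')=\sum_e\mathbf 1[e\text{ separates }\omega,\omega']$ yields $-\frac12 z^TD_{\delta\Omega}z$. The only difference is cosmetic: you rewrite each $s_e(z)^2$ as a separation sum and then add, whereas the paper expands $z^TD_{\delta\Omega}z$ edge by edge and recognizes $-2s_e(z)^2$.
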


\begin{proof}
Let $e=xy$ be an oriented edge, with $x$ the parent and $y$ the child, and let $W_e$ be the
descendant component of $T-e$ containing $y$.
Summing $\Delta u_z(v)$ over all vertices $v\in W_e$, every edge internal to $W_e$ cancels, and the
only surviving contribution comes from the cut edge $e$:
$$
\sum_{v\in W_e}\Delta u_z(v)=u_z(y)-u_z(x).
$$
Now every non-leaf vertex of $W_e$ is interior to $T$ and therefore harmonic, while every leaf
$\omega\in U_e$ satisfies
$\Delta u_z(\omega)=\partial_n u_z(\omega)=z_\omega$
because the boundary is the leaf set.
Hence
$$
u_z(y)-u_z(x)=\sum_{\omega\in U_e} z_\omega=s_e(z).
$$
Therefore
$$
Q_T(z)=\sum_{e=xy\in E}\bigl(u_z(x)-u_z(y)\bigr)^2=\sum_{e\in E}s_e(z)^2,
$$
which proves the first identity.

For the distance-matrix formula, write
$$
d(\alpha,\beta)=\sum_{e\in E}\mathbf 1_{\{e\text{ separates }\alpha\text{ and }\beta\}}.
$$
Then
$$
z^T D_{\delta\Omega} z
=
\sum_{e\in E}
\sum_{\substack{\alpha\in U_e\\ \beta\notin U_e}} z_\alpha z_\beta
+
\sum_{e\in E}
\sum_{\substack{\alpha\notin U_e\\ \beta\in U_e}} z_\alpha z_\beta.
$$
For a fixed edge $e$, the two inner sums are equal, so its contribution is
$$
2\Bigl(\sum_{\alpha\in U_e} z_\alpha\Bigr)
 \Bigl(\sum_{\beta\notin U_e} z_\beta\Bigr).
$$
Because $\sum_{\omega\in\delta\Omega}z_\omega=0$, this equals
$$
-2\left(\sum_{\alpha\in U_e} z_\alpha\right)^2=-2\,s_e(z)^2.
$$
Summing over all edges and using the first identity gives
$$
z^T D_{\delta\Omega} z=-2\sum_{e\in E}s_e(z)^2=-2Q_T(z).
$$
\end{proof}

\subsection{Two-center profiles}

\begin{definition}
A \emph{double spider} $\DS(a_1,\dots,a_p\,;\,b_1,\dots,b_q)$ is a tree obtained by joining two distinguished vertices $u$ and $v$ by one edge $uv$, attaching $p$
internally disjoint paths of lengths $a_1,\dots,a_p$ to $u$, and attaching $q$ internally disjoint
paths of lengths $b_1,\dots,b_q$ to $v$. We also refer to this configuration as a
\emph{two-center profile}. We assume throughout that
$$
a_1\ge a_2\ge \cdots \ge a_p\ge 1,
\qquad
b_1\ge b_2\ge \cdots \ge b_q\ge 1.
$$
\end{definition}

If $a_1=b_1=r$, then $\DS(a_1,\dots,a_p\,;\,b_1,\dots,b_q)$ has odd diameter $2r+1$.
Geometrically, it is a two-center configuration whose boundary leaves split naturally into a $u$-side and a $v$-side.

\begin{lemma}\label{lem:DS-inverse}
Let
$D=\DS(a_1,\dots,a_p\,;\,b_1,\dots,b_q)$
and let
$z=(x_1,\dots,x_p,y_1,\dots,y_q)\in\mathbb{R}^{p+q}$
be a boundary vector with
$$
\sum_{i=1}^p x_i + \sum_{j=1}^q y_j = 0.
$$
Let $s=\sum_{i=1}^p x_i = -\sum_{j=1}^q y_j.$
Then
\begin{equation}\label{eq:DS-inverse}
Q_D(z)= s^2 + \sum_{i=1}^p a_i x_i^2 + \sum_{j=1}^q b_j y_j^2.
\end{equation}
Hence
$$
\frac{1}{\lambda_2(D)}
=
\max_{\substack{z\ne 0\\ \sum x_i+\sum y_j=0}}
\frac{s^2 + \sum_i a_i x_i^2 + \sum_j b_j y_j^2}{\sum_i x_i^2 + \sum_j y_j^2}.
$$
\end{lemma}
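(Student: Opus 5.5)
The plan is to apply the cut-sum formula of Proposition~\ref{prop:cutsum} directly to the double spider $D=\DS(a_1,\dots,a_p\,;\,b_1,\dots,b_q)$, and then read off the variational statement from Proposition~\ref{prop:inverse-rayleigh}.

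Root the tree at $u$ and orient all edges away from $u$. The edges then split into three groups, and I would compute the cut flux $s_e(z)$ for each group.

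\begin{enumerate}[label=(\alph*)]
\item Every edge on the $i$-th path attached to $u$ has descendant component containing exactly one boundary leaf, namely the endpoint of that path carrying flux $x_i$. So $s_e(z)=x_i$ for each of the $a_i$ edges on this path, and these edges contribute $a_i x_i^2$.
\item Likewise, each of the $b_j$ edges on the $j$-th path attached to $v$ has $s_e(z)=y_j$, contributing $b_j y_j^2$.
\item The central edge $uv$, oriented from $u$ to $v$, has descendant component containing exactly the $v$-side leaves. Hence $s_{uv}(z)=\sum_j y_j=-s$, contributing $s^2$.
\end{enumerate}

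Summing the three contributions gives \eqref{eq:DS-inverse}.

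One bookkeeping point needs checking: the boundary leaves must be exactly the $p+q$ path endpoints. This fails if $u$ or $v$ is itself a leaf, which happens when $p=0$ or $q=0$, or when the ``degree one'' situation arises. Under the standing convention that both centers carry at least one path, $u$ has degree $p+1\ge 2$ and $v$ has degree $q+1\ge 2$. Hence neither $u$ nor $v$ is a leaf, and every other vertex of degree one is a path endpoint. I expect this to be the only delicate point.

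Finally, substitute \eqref{eq:DS-inverse} into Proposition~\ref{prop:inverse-rayleigh}, using the fact that the denominator there is $\sum_i x_i^2+\sum_j y_j^2$. This yields the displayed formula for $1/\lambda_2(D)$.
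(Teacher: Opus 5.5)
Your proposal is correct and follows the paper's argument: apply the cut-sum identity of Proposition~\ref{prop:cutsum} edge by edge (each branch edge carries $x_i$ or $y_j$, and the central edge carries $\pm s$), then read off the formula for $1/\lambda_2(D)$ from Proposition~\ref{prop:inverse-rayleigh}. Your check that $p,q\ge 1$ makes the boundary exactly the $p+q$ path endpoints is a detail the paper leaves implicit.
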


\begin{proof}
By Proposition~\ref{prop:cutsum}, each edge on the $i$-th branch emanating from $u$ has cut sum $x_i$,
so the whole branch contributes $a_i x_i^2$.
Similarly, each edge on the $j$-th branch emanating from $v$ has cut sum $y_j$, so that branch contributes
$b_j y_j^2$.
The central edge $uv$ has cut sum
$\sum_{i=1}^p x_i=s$,
hence contributes $s^2$.
Summing all edge contributions gives \eqref{eq:DS-inverse}.
\end{proof}

\begin{lemma}\label{lem:DS-root}
Let
$D=\DS(a_1,\dots,a_p\,;\,b_1,\dots,b_q)$
with
$a_1=b_1=r\ge a_i,b_j.$
Let
$$
A_\rho=\sum_{i=1}^p \frac{1}{\rho-a_i},
\qquad
B_\rho=\sum_{j=1}^q \frac{1}{\rho-b_j},
\qquad \rho>r.
$$
Then $\rho_D=1/\lambda_2(D)$ is the unique solution of
\begin{equation}\label{eq:DS-root}
\frac{1}{A_\rho}+\frac{1}{B_\rho}=1,
\qquad \rho>r.
\end{equation}
Moreover, every maximizing vector in Lemma~\ref{lem:DS-inverse} can be chosen in the form
$x_i>0\ \ (1\le i\le p)$, $y_j<0\ \ (1\le j\le q),$
or with all signs reversed.
\end{lemma}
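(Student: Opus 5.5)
Starting from Lemma~\ref{lem:DS-inverse}, I would maximize the Rayleigh quotient with a Lagrange multiplier computation. For a maximizing vector $z$ with value $\rho=\rho_D$, the stationarity conditions for $s^2+\sum a_ix_i^2+\sum b_jy_j^2-\rho(\sum x_i^2+\sum y_j^2)$ under $\sum x_i+\sum y_j=0$ read
$$
s+a_ix_i-\rho x_i=\mu,\qquad -s+b_jy_j-\rho y_j=\mu,
$$
with a multiplier $\mu$. First I would show $\rho>r$. Test vectors concentrated on the two length-$r$ branches, $x_1=1$, $y_1=-1$, give quotient $(1+2r)/2>r$. So $\rho\neq a_i,b_j$ for all $i,j$, and we can solve $x_i=(\mu-s)/(a_i-\rho)=(s-\mu)/(\rho-a_i)$ and $y_j=-(s+\mu)/(\rho-b_j)$.

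Next I would eliminate $\mu$ and $s$. Summing gives $s=(s-\mu)A_\rho$ and $-s=-(s+\mu)B_\rho$, that is, $s=(s+\mu)B_\rho$. Adding the two relations, or imposing $\sum x_i+\sum y_j=0$, is automatic. If $s=0$, then $(s-\mu)A_\rho=0$ with $A_\rho>0$ forces $\mu=0$, and then $z=0$, a contradiction. So $s\neq0$, and we get $s-\mu=s/A_\rho$ and $s+\mu=s/B_\rho$. Adding these yields $2s=s(1/A_\rho+1/B_\rho)$. This gives $1/A_\rho+1/B_\rho=2$, not $1$. I would therefore recheck the normalization: from the Hessian/Green identity, the gradient of $Q$ is $2(s\cdot\mathbf 1_u - s\cdot\mathbf 1_v)+2a_ix_i$, so the constraint should be reread carefully. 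Writing $Q=s^2+\dots$ with $s=\sum x_i$, the derivative in $x_i$ is $2s+2a_ix_i$, and in $y_j$ it is $2s'\cdot\partial s'/\partial y_j$ where $s=-\sum y_j$, giving $2s+2b_jy_j$ with the same sign. The corrected system is $s+a_ix_i-\rho x_i=\mu$ and $s+b_jy_j-\rho y_j=\mu$. Then $x_i=(s-\mu)/(\rho-a_i)$ and $y_j=(s-\mu)/(\rho-b_j)$, which would force all of them to share a sign. That contradicts the claimed sign pattern, so the $s^2$ term must be handled by treating $s$ as shared, and the cleaner route is to substitute $s=\sum x_i$ only on the $u$-side. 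I would settle this bookkeeping first, as it is the main obstacle, and expect the correct outcome to be $x_i=\alpha/(\rho-a_i)$ and $y_j=-\beta/(\rho-b_j)$ with $\alpha\beta>0$, together with $\alpha A_\rho=s=\beta B_\rho$ and $\alpha+\beta=s$. These yield exactly $1/A_\rho+1/B_\rho=1$.

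For uniqueness, $\rho\mapsto 1/A_\rho+1/B_\rho$ on $(r,\infty)$ is continuous and strictly increasing, since $A_\rho$ and $B_\rho$ are decreasing. It tends to $0$ as $\rho\downarrow r$, because $A_\rho,B_\rho\to\infty$ there given $a_1=b_1=r$, and it tends to $\infty$ as $\rho\to\infty$. So there is exactly one root, and every critical value exceeding $r$ equals it; since $\rho_D>r$, the maximum is this root. The sign claim follows from $x_i=\alpha/(\rho-a_i)$ and $y_j=-\beta/(\rho-b_j)$ with $\alpha,\beta$ of the same sign and nonzero (since $s\ne0$). Normalizing $s>0$ gives $x_i>0$ and $y_j<0$, and the opposite normalization gives the reversed signs. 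Any maximizer is a critical point with $\rho=\rho_D>r$, so it has this form.
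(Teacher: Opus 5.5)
There is a genuine gap, though a repairable one. Your route is the paper's: the test vector $x_1=1$, $y_1=-1$ gives $\rho_D\ge r+\tfrac12$, then Lagrange conditions, solving for $x_i,y_j$, strict monotonicity of $1/A_\rho+1/B_\rho$ with limits $0$ and $\infty$, and signs read off from the explicit form. The relations you finally anticipate are also correct. But the central step, deriving $1/A_\rho+1/B_\rho=1$, is never carried out. Both stationarity systems you write are wrong, and the correct one is only announced as an expectation.

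Both attempts make the same mistake. You differentiate $s^2$ through $s=\sum_i x_i$ in the $x$-variables and through $s=-\sum_j y_j$ in the $y$-variables. That amounts to differentiating $(\sum_i x_i)^2+(\sum_j y_j)^2$, which equals $2s^2$, not $s^2$, on $H=\{\sum_i x_i+\sum_j y_j=0\}$. Your first system is therefore exactly the Lagrange system for $2s^2+\sum_i a_ix_i^2+\sum_j b_jy_j^2$, which is why it produced $1/A_\rho+1/B_\rho=2$. The second system also flips a sign: $\partial_{y_j}s^2=-2s$ when $s=-\sum_j y_j$. As you noticed, it then admits only $z=0$.

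To close the gap, fix one extension that agrees with $Q_D$ on $H$, for example $\widetilde Q=(\sum_i x_i)^2+\sum_i a_ix_i^2+\sum_j b_jy_j^2$ as in the paper. Since $Q_D(w)\le\rho\|w\|^2$ on $H$ with equality at the maximizer $z$, the point $z$ maximizes $\widetilde Q-\rho\|\cdot\|^2$ over the subspace $H$. Its gradient there is therefore a multiple of $\mathbf 1$, which gives
\[
(\rho-a_i)x_i=s-\mu,\qquad (\rho-b_j)y_j=-\mu .
\]
Summing over $i$ and over $j$, using $\sum_j y_j=-s$, gives $s=(s-\mu)A_\rho$ and $s=\mu B_\rho$. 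So $\alpha:=s-\mu$ and $\beta:=\mu$ satisfy exactly your relations $\alpha A_\rho=s=\beta B_\rho$ and $\alpha+\beta=s$.

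Any other extension agreeing with $Q_D$ on $H$ changes the gradient at points of $H$ only by a multiple of $\mathbf 1$, which is absorbed into $\mu$. For instance, the symmetric choice $-(\sum_i x_i)(\sum_j y_j)$ gives terms $\pm s/2$ in your normalization, not $\pm s$.

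From here your argument goes through verbatim. If $s=0$, then $\mu=0$ and $z=0$, so $s\neq0$, and dividing gives $1/A_\rho+1/B_\rho=1$. Uniqueness, and the sign pattern from $\alpha=s/A_\rho$, $\beta=s/B_\rho$, follow as you wrote. The paper instead gets $s\neq0$ from the bound $Q_D\le r\|z\|^2$ on $\{s=0\}$. It also exhibits an explicit maximizer, which your argument does not need.
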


\begin{proof}
Let
$\rho=\rho_D=\frac{1}{\lambda_2(D)}$
and let $z^\ast=(x,y)$ be a maximizer of the inverse Rayleigh quotient from
Lemma~\ref{lem:DS-inverse}.

We first show that $\rho>r$.
Indeed, because $a_i,b_j\le r$, any admissible vector with $s=0$ in
\eqref{eq:DS-inverse} satisfies
$Q_D(z)\le r\|z\|_2^2.$
On the other hand, the admissible vector defined by
$x_1=1, y_1=-1, x_i=0\ (i\ge2), y_j=0\ (j\ge2)$
has $s=1$, and Lemma~\ref{lem:DS-inverse} gives
$\frac{Q_D(z)}{\|z\|_2^2} = \frac{1+a_1+b_1}{2} = r+\frac12.$
Hence
$\rho\ge r+\frac12>r.$
In particular, the quantities
$$
A_\rho=\sum_{i=1}^p\frac1{\rho-a_i},
\qquad
B_\rho=\sum_{j=1}^q\frac1{\rho-b_j}
$$
are well defined and strictly positive.

Since every admissible vector with $s=0$ gives Rayleigh quotient at most $r<\rho$, a maximizing
vector must satisfy $s\ne 0$.
After changing the overall sign if necessary, we may assume $s>0$.

Consider the smooth extension
$$
\widetilde Q_D(x,y):=\left(\sum_{i=1}^p x_i\right)^2+
\sum_{i=1}^p a_i x_i^2+
\sum_{j=1}^q b_j y_j^2
$$
on $\mathbb{R}^{p+q}$.
On the constraint hyperplane
$$
H=\left\{(x,y)\in\mathbb{R}^{p+q}:\sum_{i=1}^p x_i+\sum_{j=1}^q y_j=0\right\},
$$
we have $\widetilde Q_D=Q_D$ by Lemma~\ref{lem:DS-inverse}.
After normalizing $z^\ast$ so that $\|z^\ast\|_2=1$, the vector $z^\ast$ maximizes
$\widetilde Q_D$ on the smooth compact manifold $H\cap S^{p+q-1}$.
Hence there exist Lagrange multipliers $\alpha$ and $\beta$ such that differentiating
$$
\widetilde Q_D(z)-\alpha\|z\|_2^2-\beta\Bigl(\sum_i x_i+\sum_j y_j\Bigr)
$$
at $z^\ast$ gives
$2s+2a_i x_i-2\alpha x_i-\beta=0, 2b_j y_j-2\alpha y_j-\beta=0.$
Taking the inner product with $z^\ast$ and using $\|z^\ast\|_2=1$,
$\sum_i x_i+\sum_j y_j=0$, and $\widetilde Q_D(z^\ast)=Q_D(z^\ast)=\rho$, we obtain
$2\rho=2\alpha.$
Thus $\alpha=\rho$, and hence
\begin{equation}\label{eq:lmult-DS}
(\rho-a_i)x_i=s-\frac{\beta}{2},
\qquad
(\rho-b_j)y_j=-\frac{\beta}{2}.
\end{equation}
Summing over $i$ and $j$ yields
$s=\Bigl(s-\frac{\beta}{2}\Bigr)A_\rho, -s=-\frac{\beta}{2}B_\rho.$
From the second identity,
$-\frac{\beta}{2}=-\frac{s}{B_\rho}<0.$
Substituting into the first gives
$s=\Bigl(s-\frac{s}{B_\rho}\Bigr)A_\rho =sA_\rho\Bigl(1-\frac1{B_\rho}\Bigr).$
Since $s\ne0$, this is equivalent to
$\frac1{A_\rho}+\frac1{B_\rho}=1.$
Thus any maximizer gives a solution of \eqref{eq:DS-root}.

Now consider the function
$\Phi(\rho):=\frac1{A_\rho}+\frac1{B_\rho}, \rho>r.$
It is strictly increasing, because $A_\rho$ and $B_\rho$ are strictly decreasing.
Moreover,
$$
\lim_{\rho\to r^+}\Phi(\rho)=0,
\qquad
\lim_{\rho\to\infty}\Phi(\rho)=+\infty.
$$
Hence \eqref{eq:DS-root} has exactly one solution in $(r,\infty)$.
Let $\rho_0$ denote this unique solution.
Since $\rho_D$ yields a solution, we have $\rho_D=\rho_0$.

Conversely, define
$x_i=\frac{1/A_{\rho_0}}{\rho_0-a_i}, y_j=-\frac{1/B_{\rho_0}}{\rho_0-b_j}.$
Then $\sum_i x_i=1=-\sum_j y_j$, so $z=(x,y)$ is admissible.
Moreover, \eqref{eq:lmult-DS} holds with $s=1$ and $-\beta/2=-1/B_{\rho_0}$.
Using \eqref{eq:DS-root}, we compute
\begin{align*}
\rho_0\|z\|_2^2
&= \sum_{i=1}^p \rho_0 x_i^2 + \sum_{j=1}^q \rho_0 y_j^2 \\
&= \sum_{i=1}^p a_i x_i^2 + \sum_{j=1}^q b_j y_j^2
   + \frac1{A_{\rho_0}}\sum_{i=1}^p x_i - \frac1{B_{\rho_0}}\sum_{j=1}^q y_j \\
&= \sum_{i=1}^p a_i x_i^2 + \sum_{j=1}^q b_j y_j^2 + \frac1{A_{\rho_0}}+\frac1{B_{\rho_0}} \\
&= 1 + \sum_{i=1}^p a_i x_i^2 + \sum_{j=1}^q b_j y_j^2 \\
&= Q_D(z).
\end{align*}
Thus the Rayleigh quotient of $z$ equals $\rho_0=\rho_D$, so $z$ is a maximizing vector.

Finally, return to the maximizing vector $z^\ast$.
Since $\rho=\rho_D>r\ge a_i,b_j$, all denominators in \eqref{eq:lmult-DS} are positive.
Moreover,
$-\frac{\beta}{2}=-\frac{s}{B_\rho}<0, s-\frac{\beta}{2}=\frac{s}{A_\rho}>0.$
Hence every $x_i$ is positive and every $y_j$ is negative.
Reversing the overall sign yields the alternative sign pattern.
This proves the sign claim.
\end{proof}

We now compare an arbitrary odd-diameter tree with a suitable two-center profile.

\begin{theorem}\label{thm:DS-domination}
Let $T$ be a tree with
$\diam(T)=2r+1, |V(T)|=n.$
Then there exists a double spider
$D=\DS(a_1,\dots,a_p\,;\,b_1,\dots,b_q)$
of the same order and diameter such that
\begin{enumerate}[label=\textnormal{(\roman*)}]
    \item $a_1=b_1=r$;
    \item every $a_i$ and $b_j$ is at most $r$;
    \item $\lambda_2(T)\le \lambda_2(D)$.
\end{enumerate}
Moreover, if equality holds, then $T$ itself is a double spider.
\end{theorem}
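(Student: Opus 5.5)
The plan is to compare inverse Rayleigh quotients via Proposition~\ref{prop:inverse-rayleigh}. We will show $\rho_T:=1/\lambda_2(T)\ge \rho_D:=1/\lambda_2(D)$ by evaluating $Q_T$ on a maximizing flux of $D$. Lemma~\ref{lem:DS-root} supplies such a flux with a sign pattern. The cut-sum formula of Proposition~\ref{prop:cutsum} then makes the comparison edge by edge.

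\emph{Step 1: the central edge.} Let $P$ be a diametral path of $T$, of length $2r+1$, and let $uv$ be its middle edge. Removing $uv$ splits $T$ into $T_u\ni u$ and $T_v\ni v$. By the standard center argument, every vertex of $T_u$ is at distance at most $r$ from $u$; otherwise we would get a path longer than $2r+1$ through $uv$ or through the far half of $P$. The same holds for $T_v$ and $v$, and both distances $r$ are attained by the ends of $P$. Since $u,v$ lie on $P$ and $r\ge1$, they are interior vertices, so the leaves of $T$ are exactly the leaves of $T_u$ other than $u$ together with those of $T_v$ other than $v$.

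\emph{Step 2: the double spider.} Root $T_u$ at $u$ and decompose its edge set into edge-disjoint upward paths, one for each leaf $\omega$. Do this greedily: process leaves in order of decreasing depth, starting with the end of $P$. Assign to $\omega$ the edges on its path to $u$ that are not yet assigned. This gives a length $a_\omega\ge1$, and the first leaf gets $a_\omega=r$. Each edge $e$ is assigned to exactly one leaf $\omega(e)\in U_e$. Do the same on $T_v$ to get lengths $b_\omega$. Let $D=\DS(a_\omega;b_\omega)$. It has the same number of edges, hence the same order $n$. It also satisfies $a_1=b_1=r$ and all lengths are at most $r$, so $\diam D=2r+1$ and (i),(ii) hold. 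Its leaves are in bijection with those of $T$.

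\emph{Step 3: the comparison.} By Lemma~\ref{lem:DS-root}, choose a maximizer $w$ for $D$ with $w_\omega>0$ on the $u$-side and $w_\omega<0$ on the $v$-side. Transport $w$ to the leaves of $T$ through the bijection; it stays mean-zero. For the edge $uv$, $s_{uv}(w)$ is the same in $T$ and $D$. For an edge $e$ of $T_u$, all terms of $s_e(w)=\sum_{\omega\in U_e}w_\omega$ are positive, so
\[
s_e(w)^2\ge w_{\omega(e)}^2 .
\]
Summing over the $a_\omega$ edges assigned to each $\omega$ gives $\sum_{e\subset T_u}s_e^2\ge\sum_\omega a_\omega w_\omega^2$, and the $v$-side is analogous. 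Hence Proposition~\ref{prop:cutsum} and Lemma~\ref{lem:DS-inverse} give $Q_T(w)\ge Q_D(w)=\rho_D\|w\|^2$, so $\rho_T\ge\rho_D$, which is (iii).

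\emph{Step 4: equality.} If $T$ is not a double spider with centers $u,v$, then some vertex of $T_u$ or $T_v$ other than the center has degree at least $3$. Then some edge $e$ has $|U_e|\ge2$, and since the $w_\omega$ there are strictly positive (or strictly negative), $s_e(w)^2>w_{\omega(e)}^2$ strictly. This gives $\rho_T>\rho_D$. So equality forces $T$ to be a double spider.

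The main obstacle is getting the direction of the comparison right, since merging branches increases cut sums and hence $Q$. It is handled by testing $T$ on $D$'s maximizer rather than the reverse, together with the strict sign pattern of Lemma~\ref{lem:DS-root}. The only delicate bookkeeping is making the path decomposition preserve both the order and $a_1=b_1=r$, which the depth-first greedy assignment does.
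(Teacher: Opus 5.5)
Your proposal is correct and follows essentially the same route as the paper. Both arguments split $T$ at the central edge and assign each edge of the two halves to a leaf below it, with the diametral geodesics kept intact, to build the double spider. Both then evaluate $Q_T$ on the sign-definite maximizer from Lemma~\ref{lem:DS-root} via the cut-sum formula, and obtain rigidity from strict positivity of the fluxes on edges with at least two leaves below.

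The differences are cosmetic and do not change the argument. You use a greedy decomposition into contiguous paths, whereas the paper allows an arbitrary assignment of edges to leaves. You also justify the depth-$r$ property of the two halves explicitly, which the paper only asserts.
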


\begin{proof}
Let $uv$ be the central edge of $T$. Removing $uv$ splits $T$ into two rooted components
$R_u$ and $R_v$, rooted at $u$ and $v$ respectively. Because $\diam(T)=2r+1$, each of these
components has depth exactly $r$.

\smallskip
\noindent\emph{Step 1: Construction of a two-center comparison profile.}
Let
$$
L_u=\{\omega_1,\dots,\omega_p\},
\qquad
L_v=\{\eta_1,\dots,\eta_q\}
$$
be the leaf sets of $R_u$ and $R_v$. Choose deepest leaves $\omega_1\in L_u$ and
$\eta_1\in L_v$ such that $d(u,\omega_1)=r$ and $d(v,\eta_1)=r$.

For every edge $e$ of $R_u$, choose a terminal leaf $\pi_u(e)\in L_u$ lying below $e$, with the
additional requirement that every edge on the geodesic from $u$ to $\omega_1$ is assigned to
$\omega_1$. Define
$$
a_i:=|\pi_u^{-1}(\omega_i)|,
\qquad i=1,\dots,p.
$$
Then $a_1=r$ and $1\le a_i\le r$ for every $i$. In the same way, assigning every edge of $R_v$
to a terminal leaf, with the full geodesic from $v$ to $\eta_1$ assigned to $\eta_1$, we obtain
integers
$$
b_j:=|\pi_v^{-1}(\eta_j)|,
\qquad j=1,\dots,q,
$$
with $b_1=r$ and $1\le b_j\le r$. Since $R_u$ and $R_v$ contain all edges of $T$ except the
central edge $uv$, we have
$$
\sum_{i=1}^p a_i+\sum_{j=1}^q b_j=|E(T)|-1=n-2.
$$
Therefore the double spider
$$
D:=\DS(a_1,\dots,a_p\,;\,b_1,\dots,b_q)
$$
has the same order as $T$, and its diameter is
$$
a_1+1+b_1=r+1+r=2r+1.
$$
Thus $D$ is a two-center profile with the same geometric constraints as $T$.

\smallskip
\noindent\emph{Step 2: Comparison of the inverse energies.}
Let $z=(x_1,\dots,x_p,y_1,\dots,y_q)$ be a maximizing boundary flux for $D$ given by
Lemma~\ref{lem:DS-root}, chosen so that $x_i>0$ and $y_j<0$. Write
$$
s=\sum_{i=1}^p x_i=-\sum_{j=1}^q y_j>0.
$$
Transfer this flux to the leaves of $T$ by assigning the value $x_i$ to $\omega_i$ and the value
$y_j$ to $\eta_j$. Denote the resulting boundary flux by $\widetilde z$. Then $\widetilde z$ is admissible
for $T$ and $\|\widetilde z\|_2^2=\|z\|_2^2$.

For an edge $e$ in $R_u$, all leaves below $e$ belong to $L_u$ and carry positive flux. Hence
$$
s_e(\widetilde z)=\sum_{\omega_i\in U_e}x_i\ge \widetilde z_{\pi_u(e)}>0,
\qquad
s_e(\widetilde z)^2\ge \widetilde z_{\pi_u(e)}^2.
$$
Summing over all edges of $R_u$ gives
$$
\sum_{e\in E(R_u)} s_e(\widetilde z)^2\ge \sum_{i=1}^p a_i x_i^2.
$$
Likewise, for an edge $e$ in $R_v$, all leaves below $e$ belong to $L_v$ and carry negative flux, so
$$
|s_e(\widetilde z)|=\sum_{\eta_j\in U_e}|y_j|\ge |\widetilde z_{\pi_v(e)}|,
\qquad
s_e(\widetilde z)^2\ge \widetilde z_{\pi_v(e)}^2,
$$
and therefore
$$
\sum_{e\in E(R_v)} s_e(\widetilde z)^2\ge \sum_{j=1}^q b_j y_j^2.
$$
Finally, the central edge $uv$ separates the two sides of the tree and contributes exactly $s^2$.
Using Proposition~\ref{prop:cutsum}, we obtain
$$
Q_T(\widetilde z)
\ge
s^2+\sum_{i=1}^p a_i x_i^2+\sum_{j=1}^q b_j y_j^2
=
Q_D(z).
$$
Hence
$$
\frac1{\lambda_2(T)}
\ge
\frac{Q_T(\widetilde z)}{\|\widetilde z\|_2^2}
\ge
\frac{Q_D(z)}{\|z\|_2^2}
=
\frac1{\lambda_2(D)}.
$$
Equivalently, $\lambda_2(T)\le \lambda_2(D)$.

\smallskip
\noindent\emph{Step 3: Rigidity in the equality case.}
Assume now that equality holds. Then every inequality above is an equality. Since each $x_i>0$
and each $y_j<0$, equality for a fixed edge $e\in E(R_u)$ forces the descendant set below $e$ to
contain only one leaf of $L_u$; otherwise the strict positivity of the $x_i$ would give
$$
s_e(\widetilde z)^2=
\left(\sum_{\omega_i\in U_e}x_i\right)^2>
\widetilde z_{\pi_u(e)}^2.
$$
Thus every edge of $R_u$ lies below exactly one leaf, which means that all branching in $R_u$
occurs at the root $u$. The same argument applies to $R_v$. Hence $T$ itself is a double spider.
\end{proof}

The next lemma is the key monotonicity statement.
It says that if both sides of an odd-diameter double spider carry more than one branch, then one can move any non-principal branch from the smaller side to the larger side and strictly increase $\lambda_2$.

\begin{lemma}\label{lem:arm-transfer}
Let
$D=\DS(a_1,\dots,a_p\,;\,b_1,\dots,b_q)$
with
$a_1=b_1=r, p\ge 1, q\ge 2.$
Let
$\rho_D=\frac{1}{\lambda_2(D)}$
and write
$$
A=\sum_{i=1}^p \frac{1}{\rho_D-a_i},
\qquad
B=\sum_{j=1}^q \frac{1}{\rho_D-b_j}.
$$
Assume that
$A\ge B.$
Fix an index $k\in\{2,\dots,q\}$ and move the branch $b_k$ from the $v$-side to the $u$-side:
$D'=\DS(a_1,\dots,a_p,b_k\,;\,b_1,\dots,\widehat{b_k},\dots,b_q).$
Then $D'$ has the same order and diameter as $D$, and
$\lambda_2(D')>\lambda_2(D).$
\end{lemma}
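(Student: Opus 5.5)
The plan is to compare the two scalar root equations from Lemma~\ref{lem:DS-root} at the single point $\rho=\rho_D$, and to use the monotonicity of $\rho\mapsto 1/A_\rho+1/B_\rho$ to decide which root is smaller.

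\emph{Admissibility.} Since $k\ge 2$, the branch $b_1=r$ stays on the $v$-side, and $a_1=r$ is still on the $u$-side. All branches of $D'$ have length at most $r$. The total number of edges is unchanged, so $D'$ has the same order as $D$. Its diameter is $a_1+1+b_1=2r+1$. Hence $D'$ satisfies the hypotheses of Lemma~\ref{lem:DS-root}. Therefore $\rho_{D'}=1/\lambda_2(D')$ is the unique solution in $(r,\infty)$ of $\Phi'(\rho):=1/A'_\rho+1/B'_\rho=1$, where
\[
A'_\rho=A_\rho+\frac1{\rho-b_k},\qquad B'_\rho=B_\rho-\frac1{\rho-b_k}.
\]
Here $B'_\rho>0$ for $\rho>r$, because the term $1/(\rho-b_1)$ survives. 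The proof of Lemma~\ref{lem:DS-root} shows that $\Phi'$ is strictly increasing on $(r,\infty)$. So it suffices to prove $\Phi'(\rho_D)>1$; this gives $\rho_{D'}<\rho_D$, that is, $\lambda_2(D')>\lambda_2(D)$.

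\emph{Key inequality.} Evaluate at $\rho=\rho_D>r$ and set $c:=1/(\rho_D-b_k)>0$, so that $A'=A+c$ and $B'=B-c>0$. Using $1/A+1/B=1$ from \eqref{eq:DS-root}, we get
\[
\Phi'(\rho_D)-1=\Bigl(\frac1{A+c}-\frac1A\Bigr)+\Bigl(\frac1{B-c}-\frac1B\Bigr)=c\Bigl(\frac1{B(B-c)}-\frac1{A(A+c)}\Bigr).
\]
This is positive exactly when $B(B-c)<A(A+c)$. That holds because $0<B\le A$ by hypothesis and $0<B-c<B\le A<A+c$. Hence $\Phi'(\rho_D)>1$ strictly, and the lemma follows.

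\emph{Main obstacle.} There is no real difficulty here. The only points needing care are that $B'_\rho$ stays positive, which is why $k\ge2$ is required, and that the root characterization and monotonicity of Lemma~\ref{lem:DS-root} apply to $D'$. Both follow because the principal branches $a_1=b_1=r$ are untouched.
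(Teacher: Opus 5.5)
Your proposal is correct and follows essentially the same route as the paper: evaluate the root function of $D'$ at $\rho_D$, subtract the root equation $1/A+1/B=1$ of $D$ to get $c\bigl(\frac{1}{B(B-c)}-\frac{1}{A(A+c)}\bigr)>0$ (the paper writes $Q=B-c$), and then conclude $\rho_{D'}<\rho_D$ from strict monotonicity. Your explicit check that $B'_\rho>0$ because $k\ge 2$ keeps $b_1$ on the $v$-side is a small point the paper leaves implicit.
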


\begin{proof}
Because $b_k\le b_1=r$, moving this branch across the central edge does not change the
diameter: the two principal branches of lengths $a_1=r$ and $b_1=r$ remain in place, hence
$\diam(D')=r+1+r=2r+1=\diam(D).$
The order is unchanged because no vertex is created or deleted.

Let $u=\frac{1}{\rho_D-b_k},
Q=\sum_{j\ne k} \frac{1}{\rho_D-b_j}.$
Then
$B=Q+u.$
By Lemma~\ref{lem:DS-root}, the inverse eigenvalue of $D$ is characterized by
$\frac1{A}+\frac1{Q+u}=1.$
Now consider the root function for $D'$:
$$
\Psi_{D'}(t)=
\frac{1}{\displaystyle \sum_{i=1}^p \frac1{t-a_i}+\frac1{t-b_k}}
+
\frac{1}{\displaystyle \sum_{j\ne k} \frac1{t-b_j}}
-1.
$$
By Lemma~\ref{lem:DS-root}, $\Psi_{D'}$ is strictly increasing on $(r,\infty)$, and its unique root is
$\rho_{D'}=\frac{1}{\lambda_2(D')}.$
Evaluating at $t=\rho_D$ gives
$\Psi_{D'}(\rho_D)=\frac1{A+u}+\frac1Q-1.$
Subtracting the old equation yields
$$
\Psi_{D'}(\rho_D)-\left(\frac1A+\frac1{Q+u}-1\right)
=
\left(\frac1{A+u}-\frac1A\right)
+
\left(\frac1Q-\frac1{Q+u}\right).
$$
Thus
$$
\Psi_{D'}(\rho_D)
=u\left(\frac1{Q(Q+u)}-\frac1{A(A+u)}\right).
$$
Since $A\ge B=Q+u>Q$, we have
$\frac1{Q(Q+u)}>\frac1{A(A+u)},$
so $\Psi_{D'}(\rho_D)>0$.
Because $\Psi_{D'}$ is strictly increasing and vanishes at $\rho_{D'}$, it follows that
$\rho_{D'}<\rho_D.$
Equivalently,
$\lambda_2(D')>\lambda_2(D).$
\end{proof}

\subsection{Rigidity of extremizers}
We can now prove the desired structural theorem.

\begin{theorem}\label{thm:extremizer-spider}
Let $D=2r+1\ge 3$ and let $n\ge D+1$.
Suppose that $T$ maximizes $\lambda_2$ among all trees with
$|V(T)|=n, \diam(T)=D.$
Then $T$ is a spider.
\end{theorem}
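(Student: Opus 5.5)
Let $T$ be an extremizer. The plan is first to show that $T$ is a double spider, and then to show that this double spider has only one branch on one of its two sides, so that it is a spider.

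\emph{Reduction to a double spider.} By Theorem~\ref{thm:DS-domination}, there is a double spider $D=\DS(a_1,\dots,a_p\,;\,b_1,\dots,b_q)$ with $a_1=b_1=r$, all lengths at most $r$, and the same order and diameter as $T$, such that $\lambda_2(T)\le\lambda_2(D)$. Since $T$ is extremal, equality holds. The rigidity part of Theorem~\ref{thm:DS-domination} then says that $T$ is itself a double spider. Moreover, the diameter is determined by the two depth-$r$ components at the central edge, so $T=\DS(a_1,\dots,a_p\,;\,b_1,\dots,b_q)$ with $a_1=b_1=r$ and all $a_i,b_j\le r$.

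\emph{At most one branch on one side.} Suppose, for contradiction, that $p\ge2$ and $q\ge2$. Let $\rho=1/\lambda_2(T)$ and form $A=\sum_i(\rho-a_i)^{-1}$ and $B=\sum_j(\rho-b_j)^{-1}$ as in Lemma~\ref{lem:DS-root}; these are well defined since $\rho>r$. The situation is symmetric under swapping the $u$-side and the $v$-side, so we may assume $A\ge B$. We then apply Lemma~\ref{lem:arm-transfer}, whose hypotheses are $p\ge1$, $q\ge2$ and $A\ge B$. Moving the branch $b_2$ to the $u$-side gives a double spider $D'$ with the same order and diameter and $\lambda_2(D')>\lambda_2(T)$. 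This contradicts maximality. Hence $\min(p,q)=1$.

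\emph{Conclusion.} Say $q=1$. Then the $v$-side is a single path of length $b_1=r$, so $v$ has degree $2$ in $T$. The path from $u$ through $v$ to the end of that branch therefore has length $r+1$, and $T$ is the spider centered at $u$ with branch lengths $r+1,a_1,\dots,a_p$. The case $p=1$ is the same with the roles of $u$ and $v$ exchanged.

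The main obstacle is the orientation step in the second paragraph. Lemma~\ref{lem:arm-transfer} needs $A\ge B$ together with $q\ge2$ on the side that donates a branch, and we must check that the symmetry relabeling gives both at once. It does: if $p,q\ge2$, then whichever side has the smaller sum has at least two branches and can donate a non-principal one. The only other care needed is that the relabeled profile still has $a_1=b_1=r$, which holds by construction.
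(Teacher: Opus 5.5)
Your proposal is correct and follows the paper's proof essentially step for step. Theorem~\ref{thm:DS-domination} and its rigidity clause make $T$ itself a double spider with $a_1=b_1=r$. If both sides had at least two branches, Lemma~\ref{lem:arm-transfer}, applied after relabeling so that the side with the smaller sum donates a nonprincipal branch, would strictly increase $\lambda_2$. So one side is a single path and $T$ is a spider. Your remarks on the orientation step and on why a one-path side gives a spider centered at the other vertex only spell out what the paper states briefly.
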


\begin{proof}
By Theorem~\ref{thm:DS-domination}, there exists a double spider
$$
D_0=\DS(a_1,\dots,a_p\,;\,b_1,\dots,b_q)
$$
of the same order and diameter as $T$ such that $\lambda_2(T)\le \lambda_2(D_0)$. Since $T$ is
globally extremal, equality must hold. The rigidity statement in Theorem~\ref{thm:DS-domination}
therefore implies that $T$ itself is a double spider.

Write
$$
T=\DS(a_1,\dots,a_p\,;\,b_1,\dots,b_q)
$$
with $a_1=b_1=r$. If both $p\ge2$ and $q\ge2$, then at the inverse eigenvalue
$\rho_T=1/\lambda_2(T)$ one of the two sums
$$
\sum_{i=1}^p \frac1{\rho_T-a_i},
\qquad
\sum_{j=1}^q \frac1{\rho_T-b_j}
$$
is at least the other. After relabeling the two sides if necessary, Lemma~\ref{lem:arm-transfer}
transports a nonprincipal branch from the smaller side to the larger one and strictly increases
$\lambda_2$, contradicting extremality. Thus one side of $T$ consists of a single branch.

A double spider with one side reduced to a single branch has at most one branching point. Hence it
is a spider.
\end{proof}

\section{Global classification and the large-order regime}\label{sec:global-classification}

\subsection{Global two-profile reduction}
We now combine the spider optimization from the first part of the paper with the structural reduction
from Theorem~\ref{thm:extremizer-spider}.

\begin{theorem}\label{thm:global-odd}
Let $D=2r+1\ge 3$ and let $n\ge D+1$.
Let $M=n-2r-2.$ Then the following hold.
\begin{enumerate}[label=\textnormal{(\roman*)}]
    \item If $M=0$, then the unique maximizer of $\lambda_2$ among all trees with order $n$ and
    diameter $D$ is the path $P_{D+1}$.
    \item Assume $M\ge 1$, and define
    $$
    s=\left\lceil \frac{r}{2}\right\rceil,
    \qquad
    q_-=\max\!\left\{1,\left\lfloor \frac{M}{s}\right\rfloor\right\},
    \qquad
    q_+=\left\lceil \frac{M}{s}\right\rceil.
    $$
    Write
    $$
    M=q_-c_-+t_-,\qquad 0\le t_-<q_-,
    $$
    $$
    M=q_+c_++t_+,\qquad 0\le t_+<q_+.
    $$
    Then
    \begin{equation}\label{eq:global-two-candidates}
    \max_{\substack{|V(T)|=n\\ \diam(T)=D}} \lambda_2(T)
    =
    \max\Bigl\{\lambda_2\bigl(\AS(r,q_-+2,c_-,t_-)\bigr),\,
    \lambda_2\bigl(\AS(r,q_++2,c_+,t_+)\bigr)\Bigr\}.
    \end{equation}
    Every maximizer is isomorphic either to $\AS(r,q_-+2,c_-,t_-)$ or to $\AS(r,q_++2,c_+,t_+)$.
    In particular, if the two values on the right-hand side of \eqref{eq:global-two-candidates}
    are different, then the maximizer is unique.
\end{enumerate}
\end{theorem}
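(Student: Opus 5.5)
The plan is to chain the three layers already established. Part (i) needs no new argument: if $M=0$ then $n=D+1$, and the only tree with $D+1$ vertices and diameter $D$ is the path $P_{D+1}$. So assume $M\ge1$ and let $T$ be any maximizer of $\lambda_2$ among trees with order $n$ and diameter $D$. Such a maximizer exists because there are finitely many such trees, and the class is nonempty since it contains spiders.

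By Theorem~\ref{thm:extremizer-spider}, $T$ is a spider. Its diameter is $2r+1$ and its order is $n$. Hence the global maximum over all trees equals the maximum over spiders with these parameters, and every global maximizer is a maximizing spider. Theorem~\ref{thm:main} then applies in two ways. First, this maximum equals $\max_{\lceil M/r\rceil\le q\le M}\lambda_2(\AS(r,q+2,c,t))$. Second, any maximizing spider with $q$ lateral branches is exactly $\AS(r,q+2,c,t)$, because Steps 1 and 2 of that proof give strict increases whenever the profile is not of that form. Theorem~\ref{thm:q-unimodal} then reduces the maximum over $q$ to the two values at $q_-$ and $q_+$, which gives \eqref{eq:global-two-candidates}.

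The remaining point, and the main obstacle, is the rigidity claim that every maximizer is isomorphic to one of the two trees. From the above, a maximizer is $\AS(r,q+2,c,t)$ for some feasible integer $q$ with $\Sigma_{r,M}(q)$ equal to the maximum. Strict unimodality of $\Sigma_{r,M}$ on $[M/r,M/s]$ and on $[M/s,M]$ shows that any integer $q<q_-$ satisfies $\Sigma_{r,M}(q)<\Sigma_{r,M}(q_-)$, and any integer $q>q_+$ satisfies $\Sigma_{r,M}(q)<\Sigma_{r,M}(q_+)$. Hence $q\in\{q_-,q_+\}$.

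Two boundary cases need checking, and I would handle them explicitly. The first is feasibility of $q_\pm$, that is, $\lceil M/r\rceil\le q_\pm\le M$. Since $s\le r$ we have $M/s\ge M/r$, which gives $q_+\ge\lceil M/r\rceil$. For $q_-$, if $\lfloor M/s\rfloor<\lceil M/r\rceil$, then no integer lies in $[M/r,M/s]$ and the increasing branch contributes nothing, so one must argue that $q_-$ is either still feasible or can be replaced. The truncation $\max\{1,\cdot\}$ covers the case $M<s$, where $q_-=q_+=1$ is feasible. The upper bound $q_\pm\le M$ holds because $s\ge1$. The second case is when $q_-=q_+$, where $M/s$ is an integer; there the two candidates coincide and the claim is immediate. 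Finally, uniqueness when the two values in \eqref{eq:global-two-candidates} differ follows at once, since only the larger candidate can attain the maximum.
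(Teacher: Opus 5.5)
Your proposal follows the paper's own chain. Theorem~\ref{thm:extremizer-spider} makes every maximizer a spider. Theorem~\ref{thm:main} then makes it $\AS(r,q+2,c,t)$ for some feasible $q$. The strict monotonicity of $\Sigma_{r,M}$ on either side of $M/s$ (Theorem~\ref{thm:q-unimodal}) forces $q\in\{q_-,q_+\}$.

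The one thing you leave unfinished is the feasibility of $q_-$. You are right that it is needed. If $q_-<\lceil M/r\rceil$, then $\AS(r,q_-+2,c_-,t_-)$ would have a lateral branch longer than $r$, hence diameter larger than $D$, and \eqref{eq:global-two-candidates} could fail. The paper also takes this for granted when it asserts that both candidates are admissible. But it always holds, so the case you worry about never occurs.

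For $M<s$ you already handled it. For $M\ge s$, write $M=as+b$ with $a=\lfloor M/s\rfloor\ge 1$ and $0\le b\le s-1$. Since $s=\lceil r/2\rceil$ gives $r\ge 2s-1$, we get
\[
ra\ \ge\ (2s-1)a\ =\ as+(s-1)a\ \ge\ as+b\ =\ M ,
\]
so $q_-=a\ge M/r$, i.e.\ $q_-\ge\lceil M/r\rceil$. Together with $q_\pm\le M$ and $q_+\ge M/s\ge M/r$, both indices are feasible.

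Feasibility is exactly what gives the correct diameter. If $q\ge M/r$, then $c=\lfloor M/q\rfloor\le r$. If moreover $t>0$, then $M/q\notin\mathbb Z$, so $c<M/q\le r$ and $c+1\le r$. Thus every lateral branch has length at most $r$, and the spider has diameter exactly $2r+1$.

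With this inserted, your argument is complete, including the rigidity claim.
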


\begin{proof}
If $M=0$, then $n=D+1$, so the only tree with order $n$ and diameter $D$ is the path $P_{D+1}$.

Assume now that $M\ge 1$.
Let $T$ be a tree with $|V(T)|=n$ and $\diam(T)=D$ maximizing $\lambda_2$.
By Theorem~\ref{thm:extremizer-spider}, the tree $T$ must be a spider.
The exact spider optimization theorem, namely Theorem~\ref{thm:main}, therefore shows that
$T$ is a generalized almost seesaw tree $\AS(r,q+2,c,t)$ for some feasible integer $q$, where $M=qc+t$ and $0\le t<q$.
The unimodality theorem, Theorem~\ref{thm:q-unimodal}, then reduces the remaining optimization
over $q$ to the two adjacent values $q_-$ and $q_+$.
This proves that every maximizer is one of the two generalized almost seesaw trees in the statement, and that the
maximal value is exactly the right-hand side of \eqref{eq:global-two-candidates}.

Conversely, both generalized almost seesaw trees $\AS(r,q_-+2,c_-,t_-)$ and $\AS(r,q_++2,c_+,t_+)$ are admissible trees of order $n$
and diameter $D$, so the right-hand side of \eqref{eq:global-two-candidates} is always attained.
The uniqueness claim is immediate.
\end{proof}

\subsection{A single-threshold comparison}
We next reorganize the remaining comparison by the order parameter $M=n-2r-2$.

\begin{proposition}\label{prop:single-threshold}
Fix $r\ge 2$ and let $s=\lceil r/2\rceil$.
Let $1\le t\le s-1$, let $k\ge s$, and let $M=ks+t$.
Then
$$
q_-=\left\lfloor \frac{M}{s}\right\rfloor=k,
\qquad
q_+=\left\lceil \frac{M}{s}\right\rceil=k+1.
$$
Hence the two candidates from Theorem~\ref{thm:global-odd} are
$$
A_{k,t}=\AS(r,k+2,s,t)
=S\bigl(r+1,r,\underbrace{s+1,\dots,s+1}_{t},\underbrace{s,\dots,s}_{k-t}\bigr),
$$
and
$$
B_{k,t}=\AS(r,k+3,s-1,k+t-s+1)
=S\bigl(r+1,r,\underbrace{s,\dots,s}_{k+t-s+1},\underbrace{s-1,\dots,s-1}_{s-t}\bigr).
$$
Let $F_{A_{k,t}}$ and $F_{B_{k,t}}$ be the corresponding root functions from Lemma~\ref{lem:spider-equation}.
Then, on $I_r=\left(\frac1{r+1},\frac1r\right)$,
we have
\begin{equation}\label{eq:single-threshold-difference}
F_{B_{k,t}}(\lambda)-F_{A_{k,t}}(\lambda)
=
\frac{P_{s,t}(\lambda)}{(1-(s-1)\lambda)(1-s\lambda)(1-(s+1)\lambda)},
\end{equation}
where
$P_{s,t}(\lambda)=1-3s\lambda+(2s^2+s-2t-1)\lambda^2.$
Moreover, the following hold.
\begin{enumerate}[label=\textnormal{(\roman*)}]
    \item If $r=2s$ and $2t\le s-1$, then
    $$
    \lambda_2(A_{k,t})>\lambda_2(B_{k,t})
    \qquad\text{for every }k\ge s.
    $$
    \item If $r=2s-1$ and $2t\ge s-1$, then
    $$
    \lambda_2(B_{k,t})>\lambda_2(A_{k,t})
    \qquad\text{for every }k\ge s.
    $$
    \item In the remaining cases, namely
    $$
    r=2s,\ 2t\ge s,
    \qquad\text{or}\qquad
    r=2s-1,\ 2t\le s-2,
    $$
    the polynomial $P_{s,t}$ has a unique root $\zeta_{s,t}$ in $I_r$.
    Define
    $$
    \kappa_{r,t}
    =-(1-s\zeta_{s,t})\left(
    \frac{1}{1-(r+1)\zeta_{s,t}}
    +\frac{1}{1-r\zeta_{s,t}}
    +\frac{t-s+1}{1-s\zeta_{s,t}}
    +\frac{s-t}{1-(s-1)\zeta_{s,t}}
    \right).
    $$
    Then, for every $k\ge s$,
    $$
    \lambda_2(A_{k,t})>\lambda_2(B_{k,t})\iff k>\kappa_{r,t},
    $$
    $$
    \lambda_2(B_{k,t})>\lambda_2(A_{k,t})\iff k<\kappa_{r,t},
    $$
    and equality holds if and only if $k=\kappa_{r,t}\in\mathbb Z$.
    In particular, for each fixed residue class $t$, there is at most one value of $k$ for which the two candidates tie.
\end{enumerate}
\end{proposition}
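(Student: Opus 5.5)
The plan is to reduce the comparison of $\lambda_2(A_{k,t})$ and $\lambda_2(B_{k,t})$ to the sign of the $k$-independent rational function $F_{B_{k,t}}-F_{A_{k,t}}$, evaluated at the root of $F_{A_{k,t}}$.

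\emph{Step 1: the two candidates.} Since $1\le t\le s-1<s\le k$, we have $M=ks+t$ with $0\le t<k$. Hence $q_-=k$, $c_-=s$, $t_-=t$. Also $q_+=k+1$ and
$$
M=(k+1)(s-1)+(k+t-s+1).
$$
Here $1\le t+1\le k+t-s+1<k+1$ because $k\ge s$ and $t<s$. So $c_+=s-1$ and $t_+=k+t-s+1$. Both lateral lengths are at most $s+1\le r$ once $s\ge 2$. The case $s=1$, i.e. $r=2$, has no admissible $t$.

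\emph{Step 2: the difference formula.} The branches $r+1$ and $r$ cancel in $F_{B_{k,t}}-F_{A_{k,t}}$. The two coefficients of $1/(1-s\lambda)$ combine to $(k+t-s+1)-(k-t)=2t-s+1$, so
$$
F_{B_{k,t}}-F_{A_{k,t}}=\frac{2t-s+1}{1-s\lambda}+\frac{s-t}{1-(s-1)\lambda}-\frac{t}{1-(s+1)\lambda},
$$
which is independent of $k$. Putting this over the common denominator should give the numerator $P_{s,t}$; this is a routine check of the coefficients $1$, $-3s$ and $2s^2+s-2t-1$. On $I_r$ all three denominator factors are positive, since $s+1\le r$.

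\emph{Step 3: reducing to the sign of $P_{s,t}$.} Let $\lambda_A=\lambda_2(A_{k,t})$. Both root functions are strictly increasing on $I_r$, and their unique roots there are the eigenvalues by Lemma~\ref{lem:spider-equation}. Since $F_{A_{k,t}}(\lambda_A)=0$, we get
$$
\lambda_2(B_{k,t})>\lambda_A\iff F_{B_{k,t}}(\lambda_A)<0\iff P_{s,t}(\lambda_A)<0,
$$
and similarly with the inequalities reversed, or with equality.

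\emph{Step 4: sign analysis of $P_{s,t}$ on $I_r$.} This step carries the case distinction. $P_{s,t}$ is a convex quadratic with vertex at $\lambda^\ast=3s/\bigl(2(2s^2+s-2t-1)\bigr)$.
\begin{itemize}
\item For $r=2s$, the inequality $\lambda^\ast\ge 1/r$ reduces to $6s^2>4s^2+2s-4t-2$. For $r=2s-1$, it reduces to $2s^2-5s+4t+2>0$, which holds for $s\ge2$, $t\ge1$. So $P_{s,t}$ is strictly decreasing on $I_r$ in both parities.
\item At the endpoints, for $r=2s$ one gets $(r+1)^2P(1/(r+1))=2s-2t>0$ and $r^2P(1/r)=s-2t-1$.
\item For $r=2s-1$ one gets $(r+1)^2P(1/(r+1))=s-2t-1$ and $r^2P(1/r)=-2t<0$.
\end{itemize}
Hence in case (i) $P_{s,t}>0$ on the open interval $I_r$, giving $\lambda_A>\lambda_2(B_{k,t})$. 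In case (ii) $P_{s,t}<0$ on $I_r$, giving the reverse. In case (iii) the endpoint values have opposite strict signs, so there is a unique root $\zeta=\zeta_{s,t}$, with $P>0$ on the left of $\zeta$ and $P<0$ on the right.

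\emph{Step 5: the threshold in case (iii).} Here $\lambda_A>\lambda_2(B_{k,t})$ iff $\lambda_A<\zeta$, which by monotonicity of $F_{A_{k,t}}$ holds iff $F_{A_{k,t}}(\zeta)>0$. Since $P(\zeta)=0$, we have $F_{A_{k,t}}(\zeta)=F_{B_{k,t}}(\zeta)$. Writing the $k$-dependence of $F_{B_{k,t}}$ explicitly,
$$
F_{B_{k,t}}(\zeta)=\frac{k}{1-s\zeta}+\Bigl[\frac{1}{1-(r+1)\zeta}+\frac{1}{1-r\zeta}+\frac{t-s+1}{1-s\zeta}+\frac{s-t}{1-(s-1)\zeta}\Bigr].
$$
This is affine and strictly increasing in $k$ because $1-s\zeta>0$, and it vanishes exactly at $k=\kappa_{r,t}$. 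This yields the two equivalences and the equality case $k=\kappa_{r,t}\in\mathbb Z$.

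The main obstacle is Step 4. The vertex comparison gives monotonicity of $P_{s,t}$ on all of $I_r$, and the endpoint evaluations must then be matched precisely with the parity conditions in (i)–(iii).
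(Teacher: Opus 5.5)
Your proposal is correct and follows essentially the same route as the paper: the $k$-independent difference $F_{B_{k,t}}-F_{A_{k,t}}$ with numerator $P_{s,t}$ over positive denominators, strict decrease of $P_{s,t}$ on $I_r$, the identical endpoint values, and the affine-in-$k$ identity $F_{B_{k,t}}(\zeta_{s,t})=(k-\kappa_{r,t})/(1-s\zeta_{s,t})$. The differences are cosmetic. You place the vertex of $P_{s,t}$ to the right of $1/r$, whereas the paper bounds $P_{s,t}'$; both reduce to the same inequalities $2s^2-2s+4t+2>0$ and $2s^2-5s+4t+2>0$. You also evaluate $F_{B_{k,t}}$ at $\lambda_2(A_{k,t})$ and use $F_{A_{k,t}}(\zeta_{s,t})=F_{B_{k,t}}(\zeta_{s,t})$. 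Your computations check out, including $M=(k+1)(s-1)+(k+t-s+1)$ and the numerator coefficients $1$, $-3s$, $2s^2+s-2t-1$.
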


\begin{proof}
The identities $q_-=k$ and $q_+=k+1$ are immediate from $M=ks+t$ with $1\le t\le s-1$.
The displayed forms of $A_{k,t}$ and $B_{k,t}$ follow directly from the definition of the generalized almost seesaw trees.

Write
$$
\alpha_{k,t}=\lambda_2(A_{k,t}),
\qquad
\beta_{k,t}=\lambda_2(B_{k,t}).
$$
By Lemma~\ref{lem:spider-equation}, both $F_{A_{k,t}}$ and $F_{B_{k,t}}$ are strictly increasing on $I_r$, and $\alpha_{k,t},\beta_{k,t}$ are their unique roots there.

A direct computation gives
\begin{align*}
F_{A_{k,t}}(\lambda)
&=
\frac{1}{1-(r+1)\lambda}
+\frac{1}{1-r\lambda}
+\frac{t}{1-(s+1)\lambda}
+\frac{k-t}{1-s\lambda},\\
F_{B_{k,t}}(\lambda)
&=
\frac{1}{1-(r+1)\lambda}
+\frac{1}{1-r\lambda}
+\frac{k+t-s+1}{1-s\lambda}
+\frac{s-t}{1-(s-1)\lambda}.
\end{align*}
Subtracting the two expressions yields \eqref{eq:single-threshold-difference}.

Since $s\le r$, all three denominator factors in \eqref{eq:single-threshold-difference} are positive on $I_r$.
Hence the sign of $F_{B_{k,t}}-F_{A_{k,t}}$ on $I_r$ is exactly the sign of $P_{s,t}$.

We first show that $P_{s,t}$ is strictly decreasing on $I_r$.
Indeed,
$$
P'_{s,t}(\lambda)=-3s+2(2s^2+s-2t-1)\lambda.
$$
If $r=2s$, then $\lambda<1/r=1/(2s)$, and therefore
$$
P'_{s,t}(\lambda)
<
-3s+\frac{2(2s^2+s-2t-1)}{2s}
=
-s+1-\frac{2t+1}{s}
<0.
$$
If $r=2s-1$, then $\lambda<1/r=1/(2s-1)$, and hence
$$
P'_{s,t}(\lambda)
<
-3s+\frac{2(2s^2+s-2t-1)}{2s-1}
=
\frac{-2s^2+5s-4t-2}{2s-1}
<0.
$$
Thus $P_{s,t}$ is strictly decreasing on $I_r$ in all cases.

We now evaluate $P_{s,t}$ at the endpoints of $I_r$.

\medskip
\noindent
{\it Case 1: $r=2s$.}
Then
$$
P_{s,t}\!\left(\frac1{2s+1}\right)=\frac{2(s-t)}{(2s+1)^2}>0,
$$
and
$$
P_{s,t}\!\left(\frac1{2s}\right)=\frac{s-2t-1}{4s^2}.
$$
If $2t\le s-1$, then both endpoint values are nonnegative, and strict decrease implies
$$
P_{s,t}(\lambda)>0\qquad \text{for all }\lambda\in I_r.
$$
Hence $F_{B_{k,t}}(\lambda)-F_{A_{k,t}}(\lambda)>0$ on $I_r$.
Evaluating at $\lambda=\beta_{k,t}$ gives
$$
0-F_{A_{k,t}}(\beta_{k,t})>0,
$$
so $F_{A_{k,t}}(\beta_{k,t})<0$.
Since $F_{A_{k,t}}$ is strictly increasing and vanishes at $\alpha_{k,t}$, we obtain $\beta_{k,t}<\alpha_{k,t}$, that is,
$$
\lambda_2(A_{k,t})>\lambda_2(B_{k,t}).
$$
This proves \textnormal{(i)}.

If instead $2t\ge s$, then
$$
P_{s,t}\!\left(\frac1{2s+1}\right)>0,
\qquad
P_{s,t}\!\left(\frac1{2s}\right)<0.
$$
Since $P_{s,t}$ is strictly decreasing, it has a unique root $\zeta_{s,t}$ in $I_r$.

\medskip
\noindent
{\it Case 2: $r=2s-1$.}
Then
$$
P_{s,t}\!\left(\frac1{2s}\right)=\frac{s-2t-1}{4s^2},
$$
while
$$
P_{s,t}\!\left(\frac1{2s-1}\right)= -\frac{2t}{(2s-1)^2}<0.
$$
If $2t\ge s-1$, then both endpoint values are nonpositive, and strict decrease implies
$$
P_{s,t}(\lambda)<0\qquad \text{for all }\lambda\in I_r.
$$
Hence $F_{B_{k,t}}(\lambda)-F_{A_{k,t}}(\lambda)<0$ on $I_r$.
Evaluating at $\lambda=\alpha_{k,t}$ gives
$$
F_{B_{k,t}}(\alpha_{k,t})-0<0,
$$
so $F_{B_{k,t}}(\alpha_{k,t})<0$.
Since $F_{B_{k,t}}$ is strictly increasing and vanishes at $\beta_{k,t}$, we get $\alpha_{k,t}<\beta_{k,t}$, that is,
$$
\lambda_2(B_{k,t})>\lambda_2(A_{k,t}).
$$
This proves \textnormal{(ii)}.

If instead $2t\le s-2$, then
$$
P_{s,t}\!\left(\frac1{2s}\right)>0,
\qquad
P_{s,t}\!\left(\frac1{2s-1}\right)<0,
$$
so again $P_{s,t}$ has a unique root $\zeta_{s,t}$ in $I_r$.

\medskip
We are thus left with the two threshold cases, in which $P_{s,t}$ has a unique root $\zeta_{s,t}\in I_r$.
Since $P_{s,t}$ is strictly decreasing, we have
$$
P_{s,t}(\lambda)>0\iff \lambda<\zeta_{s,t},
\qquad
P_{s,t}(\lambda)<0\iff \lambda>\zeta_{s,t}.
$$
Equivalently,
$$
F_{B_{k,t}}(\lambda)-F_{A_{k,t}}(\lambda)>0\iff \lambda<\zeta_{s,t}.
$$

Now
$$
\lambda_2(A_{k,t})>\lambda_2(B_{k,t})
\iff \alpha_{k,t}>\beta_{k,t}
\iff F_{A_{k,t}}(\beta_{k,t})<0
\iff F_{B_{k,t}}(\beta_{k,t})-F_{A_{k,t}}(\beta_{k,t})>0,
$$
and hence
$$
\lambda_2(A_{k,t})>\lambda_2(B_{k,t})\iff \beta_{k,t}<\zeta_{s,t}.
$$
Since $F_{B_{k,t}}$ is strictly increasing and vanishes at $\beta_{k,t}$, this is equivalent to
$$
F_{B_{k,t}}(\zeta_{s,t})>0.
$$
Likewise,
$$
\lambda_2(B_{k,t})>\lambda_2(A_{k,t})\iff F_{B_{k,t}}(\zeta_{s,t})<0,
$$
and equality holds if and only if $F_{B_{k,t}}(\zeta_{s,t})=0$.

Finally, we compute $F_{B_{k,t}}(\zeta_{s,t})$:
\begin{align*}
F_{B_{k,t}}(\zeta_{s,t})
&=
\frac{1}{1-(r+1)\zeta_{s,t}}
+\frac{1}{1-r\zeta_{s,t}}
+\frac{k+t-s+1}{1-s\zeta_{s,t}}
+\frac{s-t}{1-(s-1)\zeta_{s,t}}\\
&=
\frac{k}{1-s\zeta_{s,t}}
+\left(
\frac{1}{1-(r+1)\zeta_{s,t}}
+\frac{1}{1-r\zeta_{s,t}}
+\frac{t-s+1}{1-s\zeta_{s,t}}
+\frac{s-t}{1-(s-1)\zeta_{s,t}}
\right)\\
&=
\frac{k-\kappa_{r,t}}{1-s\zeta_{s,t}}.
\end{align*}
Because $1-s\zeta_{s,t}>0$, the sign of $F_{B_{k,t}}(\zeta_{s,t})$ is exactly the sign of $k-\kappa_{r,t}$.
Therefore
$$
\lambda_2(A_{k,t})>\lambda_2(B_{k,t})\iff k>\kappa_{r,t},
$$
$$
\lambda_2(B_{k,t})>\lambda_2(A_{k,t})\iff k<\kappa_{r,t},
$$
and equality holds if and only if $k=\kappa_{r,t}\in\mathbb Z$.
This proves \textnormal{(iii)}.
\end{proof}

\subsection{Order-by-order classification}
The case $r=1$ (that is, diameter $3$) is immediate: here $s=1$ and $q_-=q_+=M$, so the unique extremizer is
$$
\AS(1,M+2,1,0)=S(2,1,\underbrace{1,\dots,1}_{M}).
$$
Hence we may assume $r\ge 2$ in the order-by-order discussion below.

\begin{theorem}\label{thm:all-orders}
Fix $r\ge 2$, let $s=\lceil r/2\rceil$, and let $n\ge 2r+2$.
Write $M=n-2r-2$.
Then the extremal problem for trees with order $n$ and diameter $2r+1$ falls into the following cases.
\begin{enumerate}[label=\textnormal{(\roman*)}]
    \item If $M=0$, then the unique extremizer is the path $P_{2r+2}$.
    \item If $1\le M<s$, then $q_-=q_+=1$, and the unique extremizer is
    $$
    \AS(r,3,M,0)=S(r+1,r,M).
    $$
    \item If $s\mid M$, then $q_-=q_+=M/s$, and the unique extremizer is
    $$
    \AS\!\left(r,\frac{M}{s}+2,s,0\right).
    $$
    \item Suppose $M=ks+t$ with $1\le t\le s-1$ and $k\ge s$.
    Then the only candidates are $A_{k,t}=\AS(r,k+2,s,t)$ and $B_{k,t}=\AS(r,k+3,s-1,k+t-s+1)$ from Proposition~\ref{prop:single-threshold}.
    More precisely:
    \begin{enumerate}[label=\textnormal{(\alph*)}]
        \item if $r=2s$ and $2t\le s-1$, then the unique extremizer is $A_{k,t}$;
        \item if $r=2s-1$ and $2t\ge s-1$, then the unique extremizer is $B_{k,t}$;
        \item in the remaining threshold cases, the extremizer is $A_{k,t}$ when $k>\kappa_{r,t}$,
        the extremizer is $B_{k,t}$ when $k<\kappa_{r,t}$, and both are extremal when $k=\kappa_{r,t}\in\mathbb Z$.
    \end{enumerate}
    \item Suppose $M=ks+t$ with $1\le k\le s-1$ and $1\le t\le s-1$.
    Then the only candidates are the two generalized almost seesaw trees from Theorem~\ref{thm:global-odd}, namely
    $$
    \AS(r,q_-+2,c_-,t_-)
    \qquad\text{and}\qquad
    \AS(r,q_++2,c_+,t_+),
    $$
    and the winner is determined by comparing the two explicit root equations from Theorem~\ref{thm:main}.
    For fixed $r$, these are finitely many initial orders.
\end{enumerate}
\end{theorem}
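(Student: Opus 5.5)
The plan is to derive Theorem~\ref{thm:all-orders} from Theorem~\ref{thm:global-odd} and Proposition~\ref{prop:single-threshold}. The work consists of computing $q_\pm$, $c_\pm$, $t_\pm$ in each arithmetic regime of $M$ and then invoking the appropriate comparison.

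\textbf{Cases (i)--(iii).} Case (i) is part (i) of Theorem~\ref{thm:global-odd}.
\begin{itemize}
\item[(ii)] If $1\le M<s$, then $\lfloor M/s\rfloor=0$, so $q_-=\max\{1,0\}=1$. Also $q_+=\lceil M/s\rceil=1$. Writing $M=1\cdot M+0$ gives $c=M$ and $t=0$, so both candidates coincide with $\AS(r,3,M,0)=S(r+1,r,M)$. Theorem~\ref{thm:global-odd} then yields uniqueness, since every maximizer is isomorphic to this single tree.
\item[(iii)] If $s\mid M$, then $q_-=q_+=M/s$ and $M=(M/s)s+0$, so the two candidates coincide and equal $\AS(r,M/s+2,s,0)$. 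Feasibility $\lceil M/r\rceil\le M/s\le M$ holds because $s\le r$ and $s\ge1$.
\end{itemize}

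\textbf{Case (iv).} Here $M=ks+t$ with $1\le t\le s-1$ and $k\ge s$, so $s\ge2$. Proposition~\ref{prop:single-threshold} already identifies $q_-=k$ and $q_+=k+1$ and gives the candidate trees $A_{k,t}$ and $B_{k,t}$. One check is needed for $B_{k,t}$: dividing $M=ks+t$ by $k+1$ should give quotient $s-1$ and remainder $k+t-s+1$. Indeed $(k+1)(s-1)+(k+t-s+1)=ks+t$. The remainder satisfies $0\le k+t-s+1<k+1$, using $k\ge s$ for the lower bound and $t\le s-1$ for the upper. This is where the hypothesis $k\ge s$ enters.

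Subcases (a), (b), (c) are then exactly items (i), (ii), (iii) of Proposition~\ref{prop:single-threshold}. Uniqueness in (a) and (b) follows from the strict inequalities there together with the last sentence of Theorem~\ref{thm:global-odd}. In (c), the trichotomy in $k$ versus $\kappa_{r,t}$ gives the winner. In the tie case both trees are extremal, and they are nonisomorphic because they have different numbers of branches.

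\textbf{Case (v).} Here $1\le k\le s-1$, so $q_-=k$ and $q_+=k+1$. Now $M/(k+1)$ may have quotient larger than $s-1$, so Proposition~\ref{prop:single-threshold} no longer applies. I would therefore only cite Theorem~\ref{thm:global-odd} for the reduction to two candidates, and Theorem~\ref{thm:main} for their root equations. Finiteness holds because $M<s^2$ bounds $n$.

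Finally, I would check that the cases are exhaustive. Every $M\ge1$ either satisfies $M<s$, or is divisible by $s$, or has the form $ks+t$ with $k\ge1$ and $1\le t\le s-1$, split according to $k\ge s$ or $k\le s-1$.

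The main obstacle is the bookkeeping in (iv), namely verifying that the division data for $q_+$ match the stated form of $B_{k,t}$. Everything else is direct citation of the earlier results.
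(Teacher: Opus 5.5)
Your proposal is correct and takes the same route as the paper, which also obtains (i)--(iii) and (v) directly from Theorem~\ref{thm:global-odd} and (iv) from Proposition~\ref{prop:single-threshold}; your added checks (the division data for $B_{k,t}$, non-isomorphism of the two trees in the tie case, exhaustiveness of the cases) make explicit what the paper leaves implicit. One side remark in (v) is misstated: since $t<s$, the quotient of $M=ks+t$ by $k+1$ is never larger than $s-1$; what can actually happen is that it falls below $s-1$ when $k+t<s-1$, or that $M/k$ has remainder $t\ge k$. This is harmless, because Proposition~\ref{prop:single-threshold} is stated only for $k\ge s$ anyway.
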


\begin{proof}
Cases \textnormal{(i)}--\textnormal{(iii)} are immediate from Theorem~\ref{thm:global-odd}, since in each of them $q_-=q_+$.
Case \textnormal{(iv)} follows from Theorem~\ref{thm:global-odd} together with Proposition~\ref{prop:single-threshold}; indeed, for $M=ks+t$ with $k\ge s$ and $1\le t\le s-1$, the two candidates are exactly $A_{k,t}=\AS(r,k+2,s,t)$ and $B_{k,t}=\AS(r,k+3,s-1,k+t-s+1)$.
Case \textnormal{(v)} is again Theorem~\ref{thm:global-odd}; the finiteness statement follows from $1\le k\le s-1$ and $1\le t\le s-1$.
\end{proof}

\end{document}